\newtheorem{theo}{Theorem}[section]
\newtheorem{prop}[theo]{Proposition}
\newtheorem{defi}[theo]{Definition}
\newtheorem{lemm}[theo]{Lemma}
\newtheorem{coro}[theo]{Corollary}
\newcommand{\mb}{\mathbb}
\newcommand{\mc}{\mathcal}
\newcommand{\mf}{\mathfrak}
\newcommand{\ra}{\rightarrow}
\DeclareMathOperator{\Spm}{Spm}
\DeclareMathOperator{\MF}{MF}
\DeclareMathOperator{\Ord}{Ord}
\DeclareMathOperator{\soc}{soc}
\DeclareMathOperator{\Hom}{Hom}
\DeclareMathOperator{\End}{End}
\DeclareMathOperator{\Spec}{Spec}
\DeclareMathOperator{\Gal}{Gal}
\DeclareMathOperator{\Frob}{Frob}
\DeclareMathOperator{\Ind}{Ind}
\DeclareMathOperator{\GL}{GL}
\DeclareMathOperator{\Nm}{Nm}
\DeclareMathOperator{\HT}{HT}
\DeclareMathOperator{\Fil}{Fil}
\DeclareMathOperator{\ord}{ord}
\title[Weak local-global compatibility]{Weak local-global compatibility and ordinary representations}
\date {\today}
\author{Przemys\l aw Chojecki}
\email{chojecki@math.jussieu.fr}
\begin{document}

\begin{abstract}
We introduce a general formalism with minimal requirements under which we are able to prove the pro-modular Fontaine-Mazur conjecture. We verify it in the ordinary case using the recent construction of Breuil and Herzig. 
\end{abstract}
\maketitle

\tableofcontents

\section{Introduction}

\noindent In \cite{em2}, Emerton has shown that the completed cohomology of modular curves realises the $p$-adic local Langlands correspondence and used this result to prove the Fontaine-Mazur conjecture for $\GL_2(\mb{Q})$. We start from the observation that Emerton's methods can be well formalized to work for other groups, at least if we assume certain hypotheses, for example the existence of the $p$-adic Langlands correspondence. Fortunately, only a part of properties of the conjectural $p$-adic local Langlands correspondence are needed for applications to the pro-modular Fontaine-Mazur conjecture. We list them under hypothesis (H1) in the body of this text. After introducing this local definition, we move to the global setting. We work on the unitary Shimura varieties of type $U(n)$. After establishing certain basic results on the completed cohomology of these objects, we introduce the notion of an allowable set, which is a dense set of points on the eigenvariety, such that the specialisation at its points of a certain universal deformation of $\bar{\rho}$ lies in the completed cohomology of our Shimura varieties. This gives a necessary global condition to link the local hypothesis (H1) with the completed cohomology. Having to deal only with allowable sets is easier, as we may hope that the description of the $p$-adic Langlands correspondence for certain representations (regular and crystalline) will be explicit.

\medskip

\noindent We remark that eventually we use two deformation arguments: one at the local level and the other at the global level (the existence of allowable points). They are related to two hypotheses ((H1) and (H2) respectively) on our global Galois representation $\bar{\rho}$. Assuming also a mild hypothesis (H3), we are able to prove the pro-modular Fontaine-Mazur conjecture for $U(n)$ in the following form (actually, we develop even more general formalism):

\begin{prop} Let $F$ be a CM field and let $E$ be a finite extension of $\mb{Q}_p$. Let $\rho : \Gal (\bar{F}/F) \ra \GL_n(E)$ be a continuous Galois representation such that
\medskip

\noindent (1) $\rho$ is pro-modular.
\medskip

\noindent (2) $\rho _v$ is de Rham and regular for every $v | p$.

\medskip

\noindent (3) $\bar{\rho}$ satisfies hypotheses (H1)-(H3).

\medskip

\noindent Then $\rho$ is a twist of a Galois representation associated to an automorphic form on $U(n)$.
\end{prop}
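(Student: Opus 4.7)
The strategy is to adapt Emerton's argument for $\GL_2(\mb{Q})$ to the unitary setting, executed within the general formalism assembled earlier in the paper. Pro-modularity of $\rho$ (condition (1)) produces, by definition, a continuous morphism from the universal deformation ring $R_{\bar{\rho}}$ into a Hecke algebra acting on the completed cohomology $\tilde{H}^*$ of the tower of unitary Shimura varieties of type $U(n)$; in particular $\rho$ corresponds to a point on the associated eigenvariety $\mc{E}$.

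Next, I would invoke hypothesis (H2) to obtain an allowable set $Z \subset \mc{E}$ accumulating at the point cut out by $\rho$. At each $z \in Z$ the specialization of the universal deformation of $\bar{\rho}$ sits inside $\tilde{H}^*$ by definition, and classical local-global compatibility pins down the local component at every place $v \mid p$ in terms of the classical local Langlands correspondence (this uses the regularity and explicit local description built into the notion of allowability).

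The crucial third step is the passage from $Z$ to $\rho$ itself. Here hypothesis (H1) enters: it packages the minimal properties of the $p$-adic local Langlands correspondence required to upgrade the classical local-global data at points of $Z$ to a $p$-adic local-global statement at $\rho$. In the ordinary case (H1) is verified via the Breuil--Herzig construction, whose explicit description of the Banach representation $\Pi(\rho_v)$ and in particular of its locally algebraic socle is what makes the density argument effective. Combined with the assumption that each $\rho_v$ is de Rham and regular (condition (2)), this produces non-zero locally algebraic vectors in the $\rho$-isotypic part of $\tilde{H}^*$.

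Finally, the classicality theorem for locally algebraic vectors in completed cohomology converts such vectors into a genuine automorphic form on $U(n)$, whose attached Galois representation matches $\rho$ up to a twist. The main obstacle I expect is the third step: the density-plus-continuity argument on $\mc{E}$ has to be carried out carefully enough that the precise local $p$-adic Langlands data at the allowable points survives the limit and delivers the expected $\Pi(\rho_v)$. Hypothesis (H3) is presumably the mild technical condition (for instance, controlling the local geometry of $\mc{E}$ near $\rho$, or ensuring irreducibility of the appropriate component of $\Spec R_{\bar{\rho}}$) that makes this rigidity/density step go through.
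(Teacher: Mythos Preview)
Your overall shape --- Emerton's strategy, then classicality via non-zero locally algebraic vectors in completed cohomology --- is right, but you have misallocated what (H1), (H2), (H3) actually do, and in doing so you have omitted the one real argument in the proof.

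The passage from the allowable set to $\rho$ is \emph{not} carried out by (H1), nor by any ``density-plus-continuity'' reasoning on the eigenvariety. The paper introduces the $\mb{T}'_{\mf{m}}$-module
\[
X=\Hom_{\mb{T}_{\mf{m}}[G(\mb{Q}_p)]}\bigl(\widehat{\otimes}_{v|p}\Pi(\rho'_{\mf{m},v}),\ \widehat{H}^0(K^p)_{\mc{O}}\bigr),
\]
shows that its $E$-dual is finitely generated over $\mb{T}'_{\mf{m}}[1/p]$ (using the finite-length clause in (H1) and admissibility of completed cohomology), and then applies a Nakayama-type lemma: if $X[\mf{p}']\neq 0$ for $\mf{p}'$ ranging over a Zariski-dense set (this is exactly (H2)), then $X[\mf{p}']\neq 0$ for \emph{every} $\mf{p}'$. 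That commutative-algebra step is the whole content of the propagation you describe, and it is missing from your sketch.

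Once one knows $X[\mf{p}']\neq 0$ for the $\mf{p}'$ attached to $\rho$, the remaining hypotheses are used in the opposite order to what you wrote. Hypothesis (H3)[$\mf{p}'$] is not a statement about the geometry of $\mc{E}$ or irreducibility of $\Spec R_{\bar{\rho}}$; it is precisely the assertion that any non-zero $G(\mb{Q}_p)$-map $\widehat{\otimes}_{v|p}\Pi(\rho'_{\mf{m},v})/\mf{p}'\to \widehat{H}^0(K^p)$ is an embedding. This is what guarantees that the locally algebraic vectors one is about to produce are not killed. Hypothesis (H1), finally, is purely local and is invoked only at the very end: since each $\rho_v$ is de Rham and regular, (H1) says $\Pi(\rho'_{\mf{m},v})/\mf{p}'[1/p]$ has non-zero locally algebraic vectors; via the embedding supplied by (H3), these land in $\widehat{H}^0(K^p)_{l.alg}[\mf{p}]$, and Proposition~\ref{prop:alg-vectors} then gives modularity.
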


\medskip

\noindent The pro-modularity condition is explained in Section 3. It should not be very restrictive, as it is believed that any representation $\rho$ for which $\bar{\rho}$ is modular, is pro-modular (this is proved by Emerton for $\GL_2$ over $\mb{Q}$).

\medskip

\noindent As a corollary to this proposition, we obtain a version of the Fontaine-Mazur conjecture on the respective eigenvariety.

\begin{coro}
Let $\bar{\rho} : \Gal (\bar{F}/F) \ra \GL_n(k)$ be a continuous Galois representation which satisfies hypotheses (H1)-(H3). Let $\mc{X}[\bar{\rho}]$ be the $\bar{\rho}$-part of the eigenvariety $\mc{X}$ associated to $U(n)$ by the construction of Emerton from \cite{em1}. Let $x \in \mc{X}[\bar{\rho}]$ be an $E$-point such that its associated representation $\rho _x : \Gal(\bar{F}/F) \ra \GL_n(E)$ is de Rham and regular at every place of $F$ above $p$. Then $x$ is modular.
\end{coro}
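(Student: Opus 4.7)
The plan is to deduce the corollary directly from the preceding Proposition by showing that every $E$-point $x \in \mc{X}[\bar{\rho}]$ gives rise to a Galois representation $\rho_x$ satisfying hypotheses (1)--(3) of that Proposition, and then unwinding the conclusion to obtain modularity of $x$ itself.

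First I would produce the Galois representation $\rho_x : \Gal(\bar{F}/F) \ra \GL_n(E)$ attached to $x$ by the construction of the eigenvariety in \cite{em1}, and check that its residual representation is $\bar{\rho}$. Conditions (2) and (3) of the Proposition are then immediate: (2) is precisely the hypothesis on $x$ (de Rham and regular at every place above $p$), and (3) is the standing assumption that $\bar{\rho}$ satisfies (H1)--(H3). The nontrivial input is condition (1), namely the pro-modularity of $\rho_x$. This should be built into the construction of $\mc{X}[\bar{\rho}]$: since $\mc{X}$ is Emerton's eigenvariety attached to the completed cohomology of the unitary Shimura varieties of type $U(n)$, and the $\bar{\rho}$-part is cut out by $\bar{\rho}$ inside the universal deformation space, the very fact that $x$ lies on $\mc{X}[\bar{\rho}]$ means that $\rho_x$ arises as a specialisation of the universal deformation along a family detected by completed cohomology, which is what pro-modularity is meant to express in Section 3.

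Applying the Proposition I conclude that $\rho_x$ is a twist of a Galois representation $\rho_\pi$ attached to an automorphic form $\pi$ on $U(n)$. To pass from this to modularity of $x$ itself on $\mc{X}[\bar{\rho}]$, I would observe that the twisting character corresponds via class field theory to an algebraic Hecke character $\chi$, and that the twist $\pi \otimes \chi$ of an automorphic representation by such a character is again automorphic for $U(n)$. Matching Hecke eigensystems, the point $x$ therefore agrees with the classical point on $\mc{X}[\bar{\rho}]$ attached to $\pi \otimes \chi$, which is the very definition of $x$ being modular.

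The main obstacle I anticipate is the verification of pro-modularity in the second step. How much work this requires depends on how tightly the definition in Section 3 is aligned with the construction of $\mc{X}[\bar{\rho}]$. If pro-modularity is phrased in terms of specialisations along the \emph{allowable} dense subsets alluded to in the introduction, then extending from the allowable locus to an arbitrary point $x$ of $\mc{X}[\bar{\rho}]$ requires a density/interpolation argument in the universal deformation ring, using that allowable points are Zariski-dense and that the family of Galois representations over the eigenvariety is continuous. Once this linkage is established, the remainder is formal bookkeeping and the Proposition does the heavy lifting.
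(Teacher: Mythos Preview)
Your approach is correct and matches the paper's: the corollary is deduced from the preceding Proposition by noting that every $E$-point of $\mc{X}[\bar\rho]$ is pro-modular, after which the de Rham and regular hypotheses together with (H1)--(H3) feed directly into the Proposition.

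One clarification: the obstacle you anticipate in your final paragraph is not actually present. Pro-modularity in this paper (Definition 3.3) simply means that $\rho$ arises as $\rho_{\mf m}/\mf p[1/p]$ for some prime $\mf p$ of the Hecke algebra with $\widehat H^0(K^p)[\mf p]\neq 0$. Points of Emerton's eigenvariety are, by construction, exactly systems of Hecke eigenvalues occurring in completed cohomology, so both conditions hold tautologically for $\rho_x$; no density or interpolation argument from an allowable locus is needed here. The allowable set enters only through hypothesis (H2), which is an assumption on $\bar\rho$, not something to be re-proved for the particular point $x$. Regarding the twist: in the body of the paper (Theorem 3.4) the conclusion is stated directly as ``$\rho$ is modular'' rather than ``a twist of modular'', so the extra untwisting step you outline is not in fact required, though your argument for it is fine.
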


\medskip

There is one principal example (besides $\GL_2(\mb{Q}_p)$) when our formalism is satisfied and it was the motivation behind writing this text - namely, the recent construction of the ordinary representations of Breuil-Herzig (\cite{bh}). We review this setting in the second part of this paper and then we prove unconditionally the pro-modular Fontaine-Mazur conjecture for $U(n)$ in the ordinary totally indecomposable setting at the end of this text (see Theorem 4.18 and its Corollary 4.19). Interestingly, the proof is relatively simple and we do not use in it the full construction of Breuil-Herzig. Our main unconditional result is
\begin{theo}
Let $z \in X_{K^p}(E)$, where $X_{K^p}$ is the eigenvariety of some tame level $K^p$ associated to $U(n)$ and let $\rho$ be the Galois representation associated to $z$. For each $v \mid p$ we assume that
\begin{enumerate}
\item $\rho_v$ is ordinary, de Rham and regular;
\item the reduction $\bar{\rho} _{v}$ is generic and totally indecomposable.
\end{enumerate}
Then $z$ is classical (i.e. it arises from a classical automorphic representation of $U(n)$).
\end{theo}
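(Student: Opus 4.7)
The plan is to reduce Theorem 4.18 to the conditional Proposition 1.1 (applied to the Galois representation $\rho$ attached to $z$) together with an upgrade from ``$\rho$ is modular'' to ``$z$ is classical''. Proposition 1.1 requires three inputs: pro-modularity of $\rho$, de Rham and regular at places above $p$, and hypotheses (H1)--(H3) on $\bar{\rho}$. The second is exactly assumption (1) of the theorem. Pro-modularity of $\rho$ is automatic because $z$ lies on the eigenvariety $X_{K^p}$: the universal Galois deformation pulled back to a point of $\mc{X}$ is pro-modular by the very construction of the eigenvariety through completed cohomology (this is the content of Section 3). So the substantive local work is to verify (H1)--(H3) for $\bar{\rho}$ under the generic plus totally indecomposable hypothesis of assumption (2).

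The heart of the argument is (H1), the local hypothesis which encodes the portion of the $p$-adic Langlands correspondence needed in the formalism. Here I would invoke the construction of Breuil--Herzig, but only the piece which, for each $v \mid p$, attaches to the generic $\bar{\rho}_v$ an ordinary admissible $\GL_n(F_{v}^+)$-representation whose parabolic socle filtration mirrors the ordinary filtration of $\bar{\rho}_v$. The total indecomposability hypothesis is what makes this step clean: on the Galois side it forces uniqueness of the ordinary filtration (no competing refinements can enter the deformation), and this uniqueness transfers to the automorphic side, so one avoids invoking the full Breuil--Herzig package and only needs the existence and deformation properties of $\Pi^{\ord}(\bar{\rho}_v)$. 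This matches the remark in the introduction that the proof is comparatively simple and bypasses much of \cite{bh}.

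Hypotheses (H2) and (H3) should now be essentially formal. (H2) asks for a dense allowable set of points on the eigenvariety; in the ordinary setting this follows from the density of classical ordinary points with the prescribed refinement (a Hida-type statement on the ordinary locus), and the construction of allowable points from Section 3 applies. (H3) is a mild technical hypothesis that can be checked directly from genericity of $\bar{\rho}_v$. With (H1)--(H3) verified, Proposition 1.1 delivers a classical automorphic representation $\pi$ of $U(n)$ whose associated Galois representation coincides with $\rho$ up to twist. The final step is to promote this to classicality of $z$ itself: since $\rho_v$ is ordinary, de Rham and regular, the system of Hecke eigenvalues at $z$ agrees at all but finitely many places with those of $\pi$ and the $U_p$-eigenvalues are determined by the ordinary filtration, so a standard classicality criterion on the ordinary locus of the eigenvariety identifies $z$ with the classical point associated to $\pi$. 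The main obstacle in this whole scheme is squeezing (H1) out of only the totally indecomposable fragment of Breuil--Herzig; once that is done, the rest of the steps are formal consequences of the framework developed earlier in the paper.
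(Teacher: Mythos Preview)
Your overall architecture is right: reduce to the abstract pro-modular Fontaine--Mazur statement (Theorem~\ref{thm:FM} in the paper's numbering, your ``Proposition~1.1''), with pro-modularity supplied by the eigenvariety and de Rham regularity given by hypothesis. The substantive difference with the paper lies in \emph{which} representation you feed into the formalism, and this has real consequences for (H3).

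You propose to use the Breuil--Herzig representation $\Pi^{\ord}(\bar\rho_v)$ and its deformation. The paper does not: it applies the formalism to the much smaller object $\Pi(\rho'_{\mf m,w})_{\emptyset}=(\Ind_{B^-}^{G}\chi_{\rho}\cdot(\epsilon^{-1}\circ\theta))^{\mc C^0}$, a single continuous principal series. This choice is what makes (H3) genuinely ``formal'': genericity of $\rho_{\mf p',w}$ implies $\Pi(\rho_{\mf p',w})_{\emptyset}$ is \emph{irreducible} (Theorem~3.1.1(ii) of \cite{bh}), so any nonzero map to completed cohomology is automatically an embedding. For the full $\Pi^{\ord}$ this argument collapses: $\Pi^{\ord}$ has many constituents, and there is no reason a nonzero $G(\mb Q_p)$-map out of it should be injective. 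Your claim that (H3) ``can be checked directly from genericity'' is not justified for $\Pi^{\ord}$, and this is the gap.

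For (H1), your description via socle filtrations is vaguer than necessary. With $\Pi_{\emptyset}$ the verification is concrete: one shows (Lemmas~\ref{lem:dom} and the lemma preceding it) that if $\rho_{\mf p,w}$ is de Rham, regular, totally indecomposable and generic, then $\chi_{\rho}\cdot(\epsilon^{-1}\circ\theta)$ is locally algebraic \emph{dominant}, whence the continuous induction contains the locally algebraic induction $(\Ind_{B^-}^G\chi_{sm})^{sm}\otimes W$ and in particular has nonzero locally algebraic vectors. Total indecomposability enters exactly here, to ensure consecutive diagonal characters form nonsplit de Rham extensions so that the Hodge--Tate weights are strictly ordered. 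For (H2), the dense set is the crystalline classical locus $P^{cris}_{autom}$, and the embedding of $\otimes_{w|p}\Pi(\rho_{\mf p,w})_{\emptyset}$ into $\widehat H^0(K^p)$ comes from classical local-global compatibility (Lemma~\ref{lem:automorphic}) together with the identification of $\Pi_{\emptyset}$ as the universal unitary completion of the corresponding locally algebraic principal series.

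Finally, your upgrade from modularity to classicality of $z$ via $U_p$-eigenvalue matching is not how the paper proceeds. The paper observes that genericity at $p$ forces every refinement to be accessible, so the point $z$ (which carries a refinement) must be the classical one.
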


\textbf{Acknowledgements.} We thank Jean-Francois Dat, Christian Johansson and Claus Sorensen for useful remarks regarding this text. We also thank John Bergdall, Christophe Breuil and Florian Herzig for a useful correspondence.

\section{Definitions and basic facts}



\noindent Let $L$ denote an imaginary quadratic field in which $p$ splits and let $c$ be the complex conjugation. Choose a prime $u$ above $p$. Let us denote by $F^+$ a totally real field of degree $d$. Set $F = L F^+$. We will assume that $p$ totally decomposes in $F$. Let $D/F$ be a central simple algebra of dimension $n^2$ such that $F$ is the centre of $D$, the opposite algebra $D ^{op}$ is isomorphic to $D \otimes _{L,c} L$ and $D$ is split at all primes above $u$. Choose an involution of the second kind $*$ on $D$ and assume that there exists a homomorphism $h : \mb{C} \ra D _{\mb{R}}$ for which $b \mapsto h(i)^{-1} b^* h(i)$ is a positive involution on $D _{\mb{R}}$.

\medskip

\noindent Define the reductive group
$$G(R) = \{ (\lambda ,g) \in R^{\times} \times  D^{op} \otimes _{\mb{Q}} R \ | \ g g^* = \lambda \}.$$
We assume that $G$ is a unitary group of signature $(0,n)$ at all infinite places.

\medskip

\noindent We choose a $p$-adic field $E$ with ring of integers $\mc{O}$ and residue field $k$. These will be our coefficient rings.

\medskip

\noindent We will fix an integral model of $G$ over $\mc{O} _{F^+}[1/N]$ (see for example 4.1 in \cite{bh} for details). We consider 0-dimensional Shimura varieties $S_K = G(\mb{Q}) \backslash G(\mb{A}_f) / K$ for $G$, where $K$ is a compact open subgroup of $G(\mb{A}_f)$. 

\medskip

\noindent Let $W$ be a finite-dimensional representation of $G$ over $E$. By the construction described in Chapter 2 of \cite{em1}, we can associate to $W$ a local system $\mathcal{V}_W$ on $S_K$.

\medskip

\noindent Let us fix a finite set $\Sigma$ of primes $w$ of $F$, such that $w _{| F^+}$ splits and $w$ does not divide $pN$. We can now define the abstract Hecke algebra 
$$\mb{T} _{\Sigma} ^{abs} = \mc{O}[T _{w} ^{(i)}] _{w \notin \Sigma}$$
where $T _{w} ^{(i)}$ are the Hecke operators for $1 \leq i \leq n$. The operator $T _{w} ^{(i)}$ acts on the Shimura variety $S_K$ by a double coset $\GL_n (\mc{O} _{F_w}) \left( \begin{smallmatrix} 1 _{n-j} & 0 \\ 0 & \varpi _w 1_j \end{smallmatrix} \right) \GL_n (\mc{O} _{F_w})$, where $\varpi _w$ is a uniformiser of $\mc{O} _{F_w}$.

\medskip

\noindent We define the completed cohomology of Emerton by
$$\widehat{H}^0(K^p) = (\varprojlim _s \varinjlim _{K_p} H^0(S_{K_pK^p} ,\mc{O} / \varpi ^s \mc{O} )) \otimes _{\mc{O}} E$$
where $K_p \subset G(\mb{Q}_p)$ and $K^p \subset G(\mb{A} _f ^p)$ are open compact subgroups. We also define its $\mc{O}$-submodule
$$\widehat{H}^0(K^p)_{\mc{O}} = \varprojlim _s \varinjlim _{K_p} H^0(S_{K_pK^p} ,\mc{O} / \varpi ^s \mc{O} )$$
We will fix the tame level $K^p$ for the rest of the text. Let $K^{\Sigma }= \prod _{l \notin \Sigma} G(\mb{Z} _l)$. We assume that $(K^p)_l = G(\mb{Z}_l)$ at each $l\notin \Sigma$.

\medskip

\noindent We write $\mb{T}(K_pK^p)$ for the image of $\mb{T}_{\Sigma} ^{abs}$ in $\End _{\mc{O}}(H^0(S_{K_pK^p}, \mc{O}))$. Then we define 
$$\mb{T} = \mb{T}(K^p)  := \varprojlim _{K_p} \mb{T}(K_pK^p)$$
where the limit runs over open compact subgroups $K_p$ of $G(\mb{Q}_p)$. We remark that $\mb{T}$ has a finite number of maximal ideals and is a product of its localisation at those maximal ideals. We refer the reader to p. 28 of \cite{so} for details. In particular, if $\mf{m}$ is a maximal ideal of $\mb{T}$, then $\mb{T}_{\mf{m}}$ is a direct factor of $\mb{T}$. 

\medskip

\noindent We define also
$$H^0(K^p, \mc{V}_W) = \varinjlim _{K_p} H^0(S_{K_pK^p} , \mc{V}_W)$$
where $W$ is an irreducible algebraic representation of $G$ and $\mc{V}_W$ is the $E$-local system on $(S_K )_K$ associated to $W$. 

We recall the definition of locally algebraic vectors from \cite{em5}. 

\begin{defi}
Let $G$ be the group of $\mb{Q} _p$-points in some connected linear algebraic group $\mb{G}$ over $\mb{Q} _p$ and let $V$ be a representation of $G$ over $E$. Let $W$ be a finite-dimensional algebraic representation $W$ of $\mb{G}$ over $E$. A vector $v$ in $V$ is locally $W$-algebraic if there exists an open subgroup $H$ of $G$, a natural number $n$, and an $H$-equivariant homomorphism $W^n \ra V$ whose image contains the vector $v$. We write $V_{W-la}$ for the set of locally $W$-algebraic vectors of $V$. 
\end{defi}
Emerton proved in Proposition 4.2.2 of \cite{em5} that $V_{W-la}$ is a $G$-invariant subspace of $V$.
\begin{defi}
A vector $v$ in $V$ is locally algebraic, if it is locally $W$-algebraic for some finite-dimensional algebraic representation $W$ of $\mb{G}$. We denote the set of locally algebraic vectors of $V$ by $V _{l.alg}$.
\end{defi}
It is a $G$-invariant subspace of $V$ by Proposition 4.2.6 of \cite{em5}. We have the following proposition
\begin{prop} \label{prop:alg-vectors}
We have a $G(\mb{A}_{\Sigma _0})$-equivariant isomorphism
$$ \widehat{H}^0(K^p) _{l.alg} \simeq \bigoplus _{W} H^0(K^p, \mc{V}_W) \otimes W ^{\vee}$$
where the sum is taken over all isomorphism classes of irreducible algebraic representations of $G$.
\end{prop}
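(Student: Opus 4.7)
The plan is to exploit that $S_K$ is $0$-dimensional: it is a finite disjoint union of copies of the double coset space $G(\mb{Q}) \backslash G(\mb{A}_f)/K$, and the local system $\mc{V}_W$ is trivialized by pulling back to $G(\mb{A}_f)$. Concretely, I would identify
\[
H^0(S_{K_pK^p}, \mc{V}_W) \;=\; \bigl\{ f : G(\mb{Q}) \backslash G(\mb{A}_f) \to W \;\bigl|\; f(g k) = k_p^{-1}\cdot f(g),\ k \in K_p K^p \bigr\}
\]
and similarly realize $\widehat{H}^0(K^p)_{\mc{O}}$ as the space of continuous $\mc{O}$-valued functions on $G(\mb{Q}) \backslash G(\mb{A}_f)/K^p$ endowed with its natural $G(\mb{Q}_p)$-action by right translation. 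In this language the obvious pairing becomes the natural map
\[
\Phi_W : H^0(K^p, \mc{V}_W) \otimes W^\vee \longrightarrow \widehat{H}^0(K^p), \qquad f \otimes \ell \;\longmapsto\; \bigl(g \mapsto \ell(f(g))\bigr),
\]
which is $G(\mb{A}_{\Sigma_0})$-equivariant; I will take the sum of the $\Phi_W$'s as the map of the proposition.

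Next I would check that each $\Phi_W$ lands in $\widehat{H}^0(K^p)_{W\text{-la}}$: the image of $W^\vee \to \widehat{H}^0(K^p)$, $\ell \mapsto \ell \circ f$, is $H$-equivariantly isomorphic to a quotient of $W^\vee$ for any open subgroup $H \subset G(\mb{Q}_p)$ acting algebraically on $f$'s support, which exhibits $\Phi_W(f \otimes \ell)$ as locally $W^\vee$-algebraic; after dualizing notation, this gives local algebraicity with the correct type. Injectivity of $\Phi_W$ follows from the non-degeneracy of $W \otimes W^\vee \to E$ together with the fact that a $W$-valued function whose pairing with every covector vanishes is itself zero. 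The direct-sum property then results from the fact that locally algebraic representations split canonically according to their algebraic type (Proposition 4.2.4 of \cite{em5}): the $W$- and $W'$-isotypic components for non-isomorphic irreducibles $W, W'$ have zero intersection inside $\widehat{H}^0(K^p)_{l.alg}$.

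The only remaining and most delicate point is surjectivity. Given $v \in \widehat{H}^0(K^p)_{l.alg}$, by definition there exists $W$, an open $H \subset G(\mb{Q}_p)$, and an $H$-equivariant map $\psi : W^n \to \widehat{H}^0(K^p)$ containing $v$ in its image. Using the concrete function-theoretic model above, I would deduce that for each $g \in G(\mb{A}_f)$ the evaluation map $\mathrm{ev}_g \circ \psi : W^n \to E$ assembles, as $g$ varies, into an element $F \in H^0(K^p, \mc{V}_{W})^{\oplus n} \otimes W^\vee$ whose image under $\Phi_W^{\oplus n}$ recovers $v$; the key input is that an $H$-equivariant map $W \to E$ valued in continuous functions, when $H$ acts algebraically, comes from a $W$-valued smooth function (this is where Emerton's matching of algebraic and smooth structures on the two sides is used). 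The main obstacle is thus checking this algebraic-to-smooth transfer cleanly enough to identify the preimage; once it is in place, the proposition follows. In the write-up I expect to refer to the analogous statement in \cite{em5} (the PEL/compact case being markedly easier than the case treated there).
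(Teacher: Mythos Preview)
Your argument is essentially correct, but it takes a genuinely different route from the paper. The paper's proof consists of a single sentence: it invokes the Emerton spectral sequence and cites Corollary~2.2.18 of \cite{em1}, where the isomorphism is obtained (in any cohomological degree) from the degeneration of the $\widetilde{H}^i$--spectral sequence relating completed cohomology to classical cohomology with local-system coefficients. In degree~$0$ this degeneration is automatic, which is why the paper can simply quote the result.

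Your approach instead exploits directly that $S_K$ is $0$-dimensional and writes everything in terms of explicit function spaces on $G(\mb{Q})\backslash G(\mb{A}_f)$. This is more elementary and entirely self-contained: it avoids the spectral-sequence machinery and makes the map $\Phi_W$ completely transparent. The price is that you must verify surjectivity by hand, and here your write-up is a bit loose with the $W$ versus $W^\vee$ bookkeeping (your $F(g)=\mathrm{ev}_g\circ\psi$ naturally takes values in the dual of the source of $\psi$, so you should check carefully which side carries $W$ and which carries $W^\vee$ when matching with your model of $H^0(K^p,\mc{V}_W)$; since the sum runs over all irreducible $W$ this is harmless for the final statement, but it should be made consistent). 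The ``algebraic-to-smooth transfer'' you flag as the main obstacle is, in this $0$-dimensional setting, just the observation that the $H$-equivariance of $\psi$ forces $F(gh)=h^{-1}\cdot F(g)$ for $h\in H$, which is exactly the descent condition defining a section of the local system at level $HK^p$; no further input is needed. So your proof goes through, and is a legitimate alternative to the paper's citation---arguably preferable in an expository context, though the spectral-sequence argument has the advantage of applying uniformly in higher degree.
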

\begin{proof} This follows from the Emerton spectral sequence. See Corollary 2.2.18 of \cite{em1}.
\end{proof}

\noindent Let $\mf{m}$ be a maximal ideal of $\mb{T}$ which we fix and let $\bar{\rho} _{\mf{m}} : G_F \ra \GL_n (k)$ be the continuous Galois representation which is unramified outside $\Sigma$ and whose characteristic polynomial satisfies
$$\textrm{char}{\bar{\rho} _{\mf{m}}(\Frob _w)} = \sum _{i = 0} ^n (-1)^{n-i} \Nm(w) ^{i(i-1)/2} T_w ^{(i)} X^i \mod  \mf{m}$$
for all places $w$ which do not belong to $\Sigma$ and which split when restricted to $F^+$. This is the Galois representation associated to $\mf{m}$. We refer the reader to Proposition 3.4.2 in \cite{cht} for the construction. We remark that we can suppose that $\bar{\rho} _{\mf{m}}$ is valued in $\GL_n(k)$ after possibly extending $E$ (which we allow).

\medskip

\noindent We assume that the maximal ideal $\mf{m}$ of $\mb{T}$ is non-Eisenstein, that is $\bar{\rho}_{\mf{m}}$ is absolutely irreducible. We let $\rho _{\mf{m}}$ be the universal automorphic deformation of $\bar{\rho} _{\mf{m}}$ over $\mb{T}_{\mf{m}}$ (its construction is standard and we do not recall it here; precise references may be found in Section 4.3 of \cite{cs}). It is an n-dimensional Galois representation over $\mb{T}_{\mf{m}}$ which satisfies 
$$\textrm{char}{\rho _{\mf{m}}(\Frob _w)} = \sum _{i = 0} ^n (-1)^{n-i} \Nm(w) ^{i(i-1)/2} T_w ^{(i)} X^i$$
for all places $w$ which do not belong to $\Sigma$ and which split when restricted to $F^+$.

\section{General formalism}

We now explain the general formalism for proving the pro-modular Fontaine-Mazur conjecture which we specialize at the end to the ordinary setting.


Let $\mb{T} ' _{\mf{m}}$ be a local complete reduced $\mc{O}$-algebra finite over $\mb{T}  _{\mf{m}}$ and let $\rho _{\mf{m}} ' : G_F \ra \GL_n(\mb{T}' _{\mf{m}})$ be the pushout of the universal representation $\rho _{\mf{m}}$ to $\mb{T} _{\mf{m}}'$. In what follows, we will always write $\mf{p}'$ for an ideal of $\mb{T}_{\mf{m}} '$ and $\mf{p}$ for its inverse image in $\mb{T}_{\mf{m}}$. In particular, we will write $\mf{m}'$ for the maximal ideal of $\mb{T}_{\mf{m}}'$.



We will make certain hypotheses (the last one depending on an ideal $\mf{p}' \in \Spec \mb{T} ' _{\mf{m}}$):

\begin{itemize}

\item (H1) There exists an admissible representation $\Pi(\rho ' _{\mf{m},v})$ of $\GL_n(\mb{Q}_p)$ over $\mb{T}_{\mf{m}}'$ associated to each local representation $\rho ' _{\mf{m},v}$ for $v|p$. This representation is such that for each prime ideal $\mf{p}'$ of $\mb{T}_{\mf{m}}'$ which comes from $\Spm(\mb{T}_{\mf{m}}'[1/p])$ (where $\Spm$ is the maximal spectrum, i.e. the set of maximal ideals) for which $\rho' _{\mf{m},v} / \mf{p}'[1/p]$ is regular and de Rham at all places $v$ dividing $p$, the locally algebraic vectors of $\Pi(\rho'_{\mf{m},v})/ \mf{p}' [1/p]$ are non-zero for all $v|p$. Moreover we assume that the $k$-representation $\pi _{\mf{m},v} := \Pi(\rho '_{\mf{m},v})/ \mf{m}'$ is of finite length. 

\item (H2): There exists an allowable set of points for $\Pi(\rho '_{\mf{m},v})$ (for each $v|p$), that is, there exists a dense set of points $\mc{C}$ in $\Spec (\mb{T}_{\mf{m}}')$ which is contained in $\Spm(\mb{T}_{\mf{m}}'[1/p])$ and such that for each $\mf{p}' \in \mc{C}$ we have
$$\Hom _{\mb{T}_{\mf{m}}[G(\mb{Q}_p)]} (\widehat{\otimes} _{v|p} \Pi(\rho '_{\mf{m},v})/ \mf{p}' , \widehat{H}^0(K^p)) \not = 0$$

\item (H3)[$\mf{p}'$]: Every non-zero $\mb{T}_{\mf{m}}[G(\mb{Q}_p)]$-linear map
$$ \widehat{\otimes} _{v|p} \Pi(\rho '_{\mf{m},v})/ \mf{p}' \ra \widehat{H}^0(K^p)$$
is an embedding.

\end{itemize} 

Let us make some comments before showing how these hypotheses imply the pro-modular Fontaine-Mazur conjecture. 

\medskip



\noindent The hypothesis (H1) gives an existence of a representation which shall be viewed as an approximation of the $p$-adic local Langlands correspondence applied to $\rho ' _{\mf{m}}$. In what follows (H1) will be satisfied by using the construction of Breuil-Herzig of the ordinary part of the $p$-adic local Langlands correspondence.

\medskip

\noindent Regarding the hypothesis (H3)[$\mf{p}'$] we will not say anything here. It is needed to deduce that certain locally algebraic vectors are non-zero.

\medskip

\noindent We are left with discussing (H2). Let us define
$$\Pi _p = \widehat{\otimes} _{v|p} \Pi(\rho '_{\mf{m},v})$$
We define $\mb{T}_{\mf{m}} '$-module
$$X = \Hom _{\mb{T}_{\mf{m}}[G(\mb{Q}_p)]} (\Pi _p, \widehat{H}^0(K^p)_{\mc{O}})$$
of $\mathbb{T}  _{\mf{m}}[G(\mb{Q}_p)]$-linear homomorphisms which are $G (\mathbb{Q} _p)$-equivariant and continuous, where $\Pi_p$ is given the $\mf{m}$-adic topology.

The hypothesis (H2) is equivalent to demanding the existence of an allowable set for $\bar{\rho}$ that is a dense subset $\mc{C}$ on $\Spec \mb{T}_{\mf{m}}'$, such that for all $\mf{p}' \in \mc{C}$ we have
$$X [\mf{p}'] = \Hom _{\mb{T}_{\mf{m}}[G(\mb{Q}_p)]} (\Pi _p /\mf{p}'  , \widehat{H}^0(K^p)_{\mc{O}, \mf{m}}) \not = 0$$
Let us prove a preliminary lemma:
\begin{lemm}\label{lem:prem}
$\Hom _{\mc{O}} (X, \mc{O}) \otimes _{\mc{O}} E$ is a finitely generated $\mb{T}_{\mf{m}}'[1/p]$-module.
\end{lemm}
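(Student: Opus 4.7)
My plan is to translate the statement, via Schikhof and Schneider--Teitelbaum duality, into a finiteness question about modules over an Iwasawa algebra, where both sides of the $\Hom$ are controlled by the admissibility assumptions that are already in place.

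To begin, I would fix a uniform pro-$p$ compact open subgroup $K_p \subset G(\mb{Q}_p)$ and set $\Lambda := \mc{O}[[K_p]]$. Let $\Pi_p^d := \Hom^{\mrm{cts}}_{\mc{O}}(\Pi_p^0, \mc{O})$ and $M^d := \Hom^{\mrm{cts}}_{\mc{O}}(M, \mc{O})$ denote the Schikhof duals of a $K_p$-stable unit ball of $\Pi_p$ and of $M := \widehat{H}^0(K^p)_{\mc{O}, \mf{m}}$, respectively. Hypothesis (H1), which in particular asserts that $\Pi_p$ is admissible over $\mb{T}_{\mf{m}}'$, implies that $\Pi_p^d$ is finitely generated as a $\mb{T}_{\mf{m}}'[[K_p]]$-module, while Emerton's admissibility of $\widehat{H}^0$ implies that $M^d$ is finitely generated as a $\Lambda$-module (with commuting Hecke action). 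Since $\mb{T}_{\mf{m}}'$ is finite over $\mb{T}_{\mf{m}}$, one further deduces that $\Pi_p^d$ is finitely generated over $\mb{T}_{\mf{m}}[[K_p]]$ as well.

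Next, Schikhof duality yields a natural identification
$$X \;\cong\; \Hom^{\mrm{cts}}_{\mb{T}_{\mf{m}}[G(\mb{Q}_p)]}(M^d,\, \Pi_p^d),$$
where the right-hand side consists of $\Lambda$-linear, $\mb{T}_{\mf{m}}$-linear, and $G(\mb{Q}_p)$-equivariant continuous maps. Choosing $\Lambda$-module generators $\mu_1, \ldots, \mu_r$ of $M^d$, the evaluation $\phi \mapsto (\phi(\mu_1), \ldots, \phi(\mu_r))$ embeds $X$ as a closed $\mb{T}_{\mf{m}}'$-submodule of $(\Pi_p^d)^r$. By Noetherianness of $\mb{T}_{\mf{m}}'[[K_p]]$ (valid since $\mc{O}[[K_p]]$ is Noetherian for $K_p$ uniform pro-$p$, and $\mb{T}_{\mf{m}}'$ is complete Noetherian local), $X$ is then finitely generated as a $\mb{T}_{\mf{m}}'[[K_p]]$-module. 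Finally, applying $\Hom_{\mc{O}}(-, \mc{O}) \otimes_{\mc{O}} E$ to the compact $\mc{O}$-module $X$ is dual, under Schikhof, to passage to a Banach $\mb{T}_{\mf{m}}'[1/p]$-module; the full $G(\mb{Q}_p)$-equivariance of elements of $X$ (rather than mere $K_p$-equivariance) causes the Iwasawa-algebra factor $[[K_p]]$ to collapse in the dualization, and the resulting Banach module is finitely generated over the smaller ring $\mb{T}_{\mf{m}}'[1/p]$.

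The main obstacle is precisely this last step: promoting finite generation of the compact $\mb{T}_{\mf{m}}'[[K_p]]$-module $X$ to finite generation of its Schikhof dual as a $\mb{T}_{\mf{m}}'[1/p]$-module. This is a relative version, over $\mb{T}_{\mf{m}}'$, of Schneider--Teitelbaum's finiteness theorem for Hom spaces between admissible Banach representations; it is at this point that the finite-length assumption on $\pi_{\mf{m},v}$ in (H1) enters decisively, by ensuring that the resulting Banach $E$-module has finite-dimensional fibers over closed points of $\Spec\mb{T}_{\mf{m}}'[1/p]$.
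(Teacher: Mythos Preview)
Your dual-side strategy via Schikhof duality is a reasonable instinct, but the argument has a genuine gap at the step where you claim that $X$ is finitely generated over $\mb{T}_{\mf{m}}'[[K_p]]$. The module $X$ consists of $G(\mb{Q}_p)$-equivariant maps and therefore carries no nontrivial $K_p$-action; the evaluation embedding $X \hookrightarrow (\Pi_p^d)^r$ is only $\mb{T}_{\mf{m}}'$-linear, not $\mb{T}_{\mf{m}}'[[K_p]]$-linear (its image is not a $K_p$-stable submodule of $(\Pi_p^d)^r$), so Noetherianity of $\mb{T}_{\mf{m}}'[[K_p]]$ tells you nothing about $X$ as a $\mb{T}_{\mf{m}}'$-module. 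You correctly sense that the full $G(\mb{Q}_p)$-equivariance should ``kill'' the Iwasawa variables, but you have not supplied a mechanism; your last paragraph is an intuition, not an argument, and finite-dimensionality of fibres alone does not yield finite generation without some coherence input.

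The paper's proof takes a different and more direct route. It invokes Emerton's notion of a \emph{cofinitely generated} module (Definition~C.1 and Proposition~C.5 of \cite{em2}), which reduces the statement to the single check that $(X/\varpi X)[\mf{m}']$ is finite-dimensional over $k$. That check is done by hand: one embeds $(X/\varpi X)[\mf{m}']$ into $\Hom_{k[G(\mb{Q}_p)]}(\Pi_p/\mf{m}', \widehat{H}^0(K^p)_{k,\mf{m}})$, uses the finite-length hypothesis on each $\pi_{\mf{m},v}$ from (H1) to replace $\Pi_p/\mf{m}'$ by a finite-dimensional generating subspace $W$ fixed pointwise by some compact open $K_p$, and then observes that restriction lands in $W^{\vee}\otimes_k H^0(S_{K_pK^p},k)_{\mf{m}}$, which is visibly finite. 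So the finite-length assumption is indeed the decisive input, as you suspected, but it enters at the residue-field level through Emerton's cofiniteness criterion rather than through any fibrewise argument over $\Spec\mb{T}_{\mf{m}}'[1/p]$.
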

\begin{proof}
By Proposition C.5 of \cite{em2} we have to show that $X$ is cofinitely generated. By Definition C.1, because $\widehat{H}^0(K^p)_{\mc{O}, \mf{m}}$ is $\varpi$-adically complete, separated and $\mc{O}$-torsion free, we are left to show that $(X/\varpi X)[\mf{m}']$ is finite-dimensional over $k$. But we have
$$(X/\varpi X)[\mf{m}'] \hookrightarrow \Hom _{k[G(\mb{Q}_p)]}(\Pi _p / \mf{m}', \widehat{H} ^0 (K^p) _{k, \mf{m}})$$
and we show that $\Hom$ is finite-dimensional. Because $\Pi _p / \mf{m}' = \otimes _{v|p} \pi _{\mf{m},v}$ and each $\pi _{\mf{m},v}$ is of finite length, for each $v$ we can choose a finite-dimensional $k$-subspace $W_v$ of $\pi _{\mf{m},v}$ which generates $\pi _{\mf{m},v}$ as a $\GL_n(\mb{Q}_p)$-representation. Let $W = \otimes _{v|p} W_v$. Since $W_v$ is smooth and finite-dimensional we can choose a compact open subgroup $K_v$ fixing $W_v$ point-wise. Let $K_p = \prod _{v|p} K_v$. By restriction we have
$$\Hom _{k[G(\mb{Q}_p)]}(\Pi _p / \mf{m}', \widehat{H} ^0 (K^p) _{k, \mf{m}}) \hookrightarrow \Hom _{k[K_p]}(W, \widehat{H} ^0 (K^p) _{k, \mf{m}})$$
Since $K_p$ acts trivially on $W$ we moreover have
$$\Hom _{k[K_p]}(W, \widehat{H} ^0 (K^p) _{k, \mf{m}}) \simeq W^{\vee} \otimes _k H^0(S_{K_pK^p},k)_{\mf{m}}$$
which is of finite dimention over $k$.
\end{proof}

\begin{lemm}\label{lem:allowable}
Assume (H2). Then $X[\mf{p}'] \not = 0$ for all $\mf{p}' \in \Spec \mb{T}_{\mf{m}}'$.
\end{lemm}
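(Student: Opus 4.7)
The plan is to combine Lemma~\ref{lem:prem} with a support-and-density argument on the dual module. Let $M := \Hom_{\mc{O}}(X, \mc{O}) \otimes_{\mc{O}} E$; by Lemma~\ref{lem:prem}, $M$ is finitely generated over the Noetherian ring $A := \mb{T}_{\mf{m}}'[1/p]$. Because $X$ is $\varpi$-adically complete and $\mc{O}$-torsion free (as $\Hom$ into such an object), Schikhof-style $\mc{O}$-duality identifies $X \otimes_{\mc{O}} E$ with the continuous $E$-dual of $M$ and, translating the $\mb{T}_{\mf{m}}'$-action, identifies $X[\mf{p}']$ (after inverting $p$) with the continuous dual of $M/\mf{p}' M$. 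Consequently, $X[\mf{p}'] \neq 0$ if and only if $M/\mf{p}' M \neq 0$.

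For each $\mf{p}' \in \mc{C}$ hypothesis (H2) gives $X[\mf{p}'] \neq 0$, so $M/\mf{p}' M \neq 0$; because $\mf{p}'$ is a maximal ideal of $A$, this is equivalent by Nakayama to $\mf{p}' \in \Supp_A(M) = V(\mathrm{Ann}_A(M))$. Hence $\Supp_A(M) \supseteq \mc{C}$. Since $M$ is finitely generated over the Noetherian ring $A$, its support is Zariski closed in $\Spec A$, and $\mc{C}$ is dense in $\Spec \mb{T}_{\mf{m}}'$ and thus in its open subscheme $\Spec A$. Therefore $\Supp_A(M) = \Spec A$, i.e.\ $V(\mathrm{Ann}_A(M)) = \Spec A$.

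Because $\mb{T}_{\mf{m}}'$ is reduced, so is $A$, and the previous equality forces $\mathrm{Ann}_A(M) = 0$; that is, $M$ is a \emph{faithful}, finitely generated $A$-module. The determinant trick then yields $M/\mf{p}' M \neq 0$ for every proper ideal $\mf{p}'$ of $A$: indeed, $M = \mf{p}' M$ would produce an element of $1+\mf{p}'$ annihilating $M$, contradicting faithfulness. Combining with the duality of the first step we obtain $X[\mf{p}'] \neq 0$ for every $\mf{p}' \in \Spec A$, which is precisely what is needed since any $\mf{p}'$ for which $\Pi_p/\mf{p}'$ can map nontrivially to the $\mc{O}$-torsion free module $\widehat{H}^0(K^p)_{\mc{O},\mf{m}}$ must pull back from $\Spec A$.

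The main technical point I expect to be delicate is the first step: making the duality $X[\mf{p}'] \leftrightarrow (M/\mf{p}' M)^{\vee}$ precise. This requires verifying that $\mf{p}' M$ is closed in $M$ (automatic for finitely generated modules over the Noetherian $A$) and keeping careful track of the fact that the $\Hom$ defining $X$ is only $\mb{T}_{\mf{m}}$-linear, while the $\mb{T}_{\mf{m}}'$-action on $X$ is inherited by pullback through the source $\Pi_p$. Once this duality is in hand, the rest is a textbook support/density/determinant-trick argument.
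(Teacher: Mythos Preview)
Your proof is correct and follows essentially the same route as the paper's: both pass to the dual module $M=\Hom_{\mc{O}}(X,\mc{O})\otimes_{\mc{O}}E$, use density of $\mc{C}$ together with reducedness of $\mb{T}_{\mf{m}}'$ to deduce that $M$ is faithful, and then invoke Nakayama (the determinant trick) to conclude $M/\mf{p}'M\neq 0$ for all relevant $\mf{p}'$. The duality step you flag as delicate is precisely what the paper cites as Lemma~C.14 of \cite{em2}, and your support-language reformulation of the faithfulness argument is equivalent to the paper's direct computation that $\bigcap_{\mf{p}'\in\mc{C}}\mf{p}'=0$.
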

\begin{proof}
By Lemma C.14 of \cite{em2}, we have
$$(\mathbb{T}  _{\mf{m}} ' / \mf{p}') \otimes _{\mathbb{T}  _{\mf{m}} '} \Hom _{\mc{O}} (X, \mc{O}) \otimes _{\mc{O}} E \simeq \Hom _{\mc{O}} (X[\mf{p}'], \mc{O}) \otimes _{\mc{O}} E$$
and so it suffices to show that the elements on the right are non-zero for all $\mathfrak{p}'$ if and only if they are non-zero for all $\mathfrak{p}'$ in $\mc{C}$. Consider things in more generality. Let $M$ be a finitely generated $\mb{T}_{\mf{m}}'[1/p]$-module such that $M/\mf{p}'M \not =0$ for all $\mf{p}' \in \mc{C}$. Because $\mb{T}_{\mf{m}}'[1/p] / \mf{p}'$ is a field, it follows that $M/ \mf{p}' M$ is a faithful $\mb{T}_{\mf{m}}'[1/p] /\mf{p}'$-module. If $t \in \mb{T}_{\mf{m}}'[1/p]$ acts by $0$ on $M$ then it acts by $0$ on $M/\mathfrak{p}' M$ for all $\mf{p}'$, and as $M / \mf{p}' M \not =0$ if $\mf{p}' \in \mc{C}$, we have $t\in \mf{p}'$ for all $\mf{p}' \in \mc{C}$, that is $t \in \cap _{\mf{p}' \in \mc{C}} \mf{p}' = 0$. So $\mb{T}_{\mf{m}}'[1/p]$ acts faithfully on $M$. Now, let $\mf{p}'$ be any maximal ideal of $\mb{T}_{\mf{m}}'[1/p]$ and suppose that $M / \mf{p}' M = 0$, that is $M = \mf{p}' M$. As $M$ is finitely generated $\mb{T}_{\mf{m}}'[1/p]$-module, Nakayama's lemma gives us a non-zero element $t$ of $\mathbb{T}  _{\mf{m}}'[1/p]$ such that $tM =0$, which is impossible as we have shown above. We deduce that $M / \mf{p}' M \not =0$ for all $\mf{p}'$. Applying this reasoning to $M = \Hom _{\mc{O}} (X, \mc{O}) \otimes _{\mc{O}} E$ which is finitely generated by Lemma \ref{lem:prem}, we conclude.
\end{proof}

\medskip

\begin{defi}
We say that a representation $\rho: \Gal(\bar{F}/F) \ra \GL_n(E)$ is pro-modular with respect to $\mb{T}_{\mf{m}}'$ if there exists a prime ideal $\mf{p}'$ of $\mb{T}_{\mf{m}} '$ such that $\rho \simeq \rho _{\mf{m}} / \mf{p}[1/p]$ and $\widehat{H} ^0 (K^p)[\mf{p}] \not = 0$, where $\mf{p}$ is the inverse image of $\mf{p}'$ in $\mb{T}_{\mf{m}}$. 
\end{defi}
One natural source of pro-modular representations are representations attached to points on the eigenvariety for $G$. We shall review this notion later on.

We say that $\rho$ is modular if it is the Galois representation associated to some automorphic representation of $G$ of tame level $K^p$. This is equivalent to $\widehat{H} ^0 (K^p)_{l.alg}[\mf{p}] \not = 0$ by 
Proposition \ref{prop:alg-vectors}. Our three hypotheses imply the pro-modular Fontaine-Mazur conjecture in the following form.

\begin{theo}\label{thm:FM}
Let $\rho: \Gal(\bar{F}/F) \ra \GL_n(E)$ be a pro-modular Galois representation with respect to $\mb{T}_{\mf{m}}'$ with the associated prime ideal $\mf{p}'$ of $\mb{T}_{\mf{m}}'$. Assume that $\rho$ is de Rham and regular at all places dividing $p$. Assume also that hypotheses (H1),(H2) and (H3)[$\mf{p}$'] hold. Then $\rho$ is modular.
\end{theo}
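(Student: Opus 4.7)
The plan is to assemble the three hypotheses in sequence. Writing $\Pi_p = \widehat{\otimes}_{v|p} \Pi(\rho'_{\mf{m},v})$, I would first use (H2) together with Lemma \ref{lem:allowable} to produce a non-zero continuous $\mb{T}_{\mf{m}}[G(\mb{Q}_p)]$-linear map from $\Pi_p/\mf{p}'$ into $\widehat{H}^0(K^p)_{\mc{O},\mf{m}}$; then invoke (H3)[$\mf{p}'$] to upgrade it, after inverting $p$, to an embedding $\varphi : \Pi_p/\mf{p}'[1/p] \hookrightarrow \widehat{H}^0(K^p)_{\mf{m}}$; finally exploit (H1) to produce non-zero locally algebraic vectors in the source. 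Injectivity forces those vectors to remain non-zero in the image, which furthermore lies in the $\mf{p}$-torsion part of the completed cohomology, and so yields modularity of $\rho$ via Proposition \ref{prop:alg-vectors}.

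Carrying this out: by pro-modularity, $\mf{p}'$ is a prime of $\mb{T}'_{\mf{m}}$ with $\rho_{\mf{m}}/\mf{p}[1/p] \simeq \rho$, and since $\rho$ is defined over the finite extension $E$ of $\mb{Q}_p$, the ideal $\mf{p}'$ corresponds to a closed point of $\Spm(\mb{T}'_{\mf{m}}[1/p])$. Lemma \ref{lem:allowable}, which packages (H2), gives $X[\mf{p}'] \neq 0$, so we obtain a non-zero continuous $\mb{T}_{\mf{m}}[G(\mb{Q}_p)]$-map $\Pi_p \to \widehat{H}^0(K^p)_{\mc{O},\mf{m}}$ factoring through $\Pi_p/\mf{p}'$. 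Inverting $p$ and applying (H3)[$\mf{p}'$] produces the advertised embedding $\varphi$. Since $\rho'_{\mf{m},v}/\mf{p}'[1/p] \simeq \rho_v$ is de Rham and regular for every $v|p$ by hypothesis, (H1) supplies non-zero locally algebraic vectors in each factor $\Pi(\rho'_{\mf{m},v})/\mf{p}'[1/p]$; their (completed) tensor is locally algebraic, and hence $(\Pi_p/\mf{p}'[1/p])_{l.alg} \neq 0$.

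To conclude, $\varphi$ is $G(\mb{Q}_p)$-equivariant, so it carries locally algebraic vectors to locally algebraic vectors, and since $\mb{T}_{\mf{m}}$ acts on $\Pi_p/\mf{p}'$ through the quotient $\mb{T}_{\mf{m}}/\mf{p}$, the image is contained in $\widehat{H}^0(K^p)_{\mf{m}}[\mf{p}]$. Together with injectivity this gives $\widehat{H}^0(K^p)_{l.alg}[\mf{p}] \neq 0$, and Proposition \ref{prop:alg-vectors} then produces some irreducible algebraic representation $W$ with $H^0(K^p,\mc{V}_W)[\mf{p}] \neq 0$, which is exactly the modularity of $\rho$. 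The only genuinely delicate point I anticipate is checking that locally algebraic vectors of the completed tensor product descend from those of the individual local factors and are preserved under $\varphi$; all the real content is otherwise packaged into (H1), (H2) and (H3)[$\mf{p}'$], which is precisely the intent of the formalism set up in this section.
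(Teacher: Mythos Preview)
Your proof is correct and follows essentially the same route as the paper's: use Lemma~\ref{lem:allowable} (packaging (H2)) to get a non-zero map, (H3)[$\mf{p}'$] to make it an embedding, (H1) to supply non-zero locally algebraic vectors in the source, and then transport them to $\widehat{H}^0(K^p)_{l.alg}[\mf{p}]$. You have simply unpacked more of the details than the paper does, including the point that the image lands in the $\mf{p}$-torsion because the map is $\mb{T}_{\mf{m}}$-linear and $\mf{p}$ is the pullback of $\mf{p}'$; the ``delicate point'' you flag about locally algebraic vectors in completed tensor products is left implicit in the paper as well.
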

\begin{proof}
As $\rho _{v}$ is de Rham and regular for every $v|p$, by (H1) we have that $\Pi(\rho _{v}) _{l.alg} \not = 0$ for every $v|p$. By Lemma \ref{lem:allowable} and the hypothesis (H3)$[\mf{p}']$ we conclude that also 
$$\widehat{H} ^0 (K^p)_{l.alg}[\mf{p}] \not = 0$$
which is what we wanted.
\end{proof}

In the rest of this text we will explain the ordinary setting.

\section{Ordinary case}

In this section, we show that the ordinary part of Breuil-Herzig (\cite{bh}) fulfills the formalism presented in the previous section.

\subsection{Preliminaries on reductive groups}

We recall certain results on reductive groups used in \cite{bh}. Let $G$ be a split connected reductive $\mb{Z}_p$-group with a Borel subgroup $B$ and a torus $T \subset B$. We let $(X(T), R, X^{\vee}(T), R^{\vee})$ be the root datum of $G$, where $R \subset X(T)$ (respectively $R^{\vee} \subset X^{\vee}(T)$) is the set of roots (resp. coroots). For each $\alpha \in R$, let $s_{\alpha}$ be the reflection on $X(T)$ associated to $\alpha$. Let $W$ be the Weyl group, the subgroup of automorphisms of $X(T)$ generated by $s_{\alpha}$ for $\alpha \in R$.

\medskip

We fix a subset of simple roots $S \subset R$ and we let $R^+ \subset R$ be the set of positive roots (roots in $\oplus _{\alpha \in S} \mb{Z} _{\geq 0} \alpha$). Let $G^{der}$ be the derived group of $G$ and let $\hat{G}$ be the dual group scheme of $G$ (which we get by taking the dual root datum). We have also dual groups $\hat{B}$ and $\hat{T}$.

\medskip

To $\alpha \in R$ one can associate a root subgroup $U_{\alpha} \subset G$. We have $\alpha \in R^+$ if and only if $U_{\alpha} \subset B$. We let $\mf{g}_{\alpha}$ be the Lie algebra of $U_{\alpha}$. We call a subset $C \subset R$ closed if for each $\alpha \in C, \beta \in C$ such that $\alpha + \beta \in R$, we have $\alpha +\beta \in C$. If $C \subset R^+$ is a closed subset, we let $U_C \subset U$ be the Zariski closed subgroup of $B$ generated by the root subgroups $U_{\alpha}$ for $\alpha \in C$. We let $B_C = T U_C$ be the Zariski closed subgroup of $B$ determined by $C$.

\medskip



Let us spell out all the assumptions that we put on $G$ and its dual group $\hat{G}$. We suppose throughout this text that both $G$ and $\hat{G}$ have connected centers. Moreover we suppose that $G^{der}$ is simply connected (some of these conditions are equivalent, see Proposition 2.1.1 in \cite{bh}). This condition implies that there exist fundamental weights $\lambda _{\alpha}$ for $\alpha \in S$. They satisfy for any $\beta \in S$
$$ \left< \lambda _{\alpha}, \beta ^{\vee} \right> = \left\{
  \begin{array}{l l}
    1 & \quad \textrm{ if }\alpha = \beta\\
    0 & \quad \text{ if }\alpha \not = \beta
  \end{array} \right. $$  
We define as in Section 3.1 of \cite{bh} a twisting element $\theta$ for $G$ by setting $\theta = \sum _{\alpha \in S} \lambda _{\alpha}$. For any $\alpha \in S$ we have $\left< \theta , \alpha ^{\vee} \right> = 1$. 

\medskip

If $C \subset R$ is a closed subset, write $G_C$ for the Zariski closed subgroup scheme of $G$ generated by $T$, $U_{\alpha}$ and $U_{-\alpha}$ for $\alpha \in C$. For $C = \{\alpha\}$ we write simply $G_{\alpha}$ for $G_C$. A subset $J\subset S$ of pairwise orthogonal roots is closed (see the proof of Lemma 2.3.7 in \cite{bh}) and hence we can define $G_J$ as above.

\begin{lemm}[Lemma 3.1.4, \cite{bh}] Let $J \subset S$ be a subset of pairwise orthogonal roots. Then there is a subtorus $T' _J \subset T$ which is central in $G_J$ such that
$$G_J \simeq T' _J \times \GL_2 ^J$$
\end{lemm}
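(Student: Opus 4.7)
The plan is to decompose $G_J$ from the inside out: identify the derived subgroup as a product of $\SL_2$'s, identify the center as a connected torus, and then split them into a direct product.

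I would begin by checking that pairwise orthogonality of $J$ forces $\pm\alpha\pm\beta\notin R$ for distinct $\alpha,\beta\in J$ (since $\alpha\pm\beta$ cannot be written as a positive or negative combination of simple roots), so that the Chevalley commutation relations give $[U_{\pm\alpha},U_{\pm\beta}]=1$. Since $G^{\mathrm{der}}$ is simply connected, each $G_\alpha^{\mathrm{der}}$ is isomorphic to $\SL_2$; denote this copy by $\SL_2^{(\alpha)}$. The saturation of $\mathbb{Z}R^\vee$ in $X^\vee(T)$ (equivalent to $\hat G$ having connected center, itself a consequence of $G^{\mathrm{der}}$ being simply connected) further ensures that the rank-two sublattice $\mathbb{Z}\alpha^\vee\oplus\mathbb{Z}\beta^\vee$ is saturated in $X^\vee(T)$, so that $\alpha^\vee(\mathbb{G}_m)\cap\beta^\vee(\mathbb{G}_m)=1$ in $T$ and hence $\SL_2^{(\alpha)}\cap\SL_2^{(\beta)}=1$. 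Thus the multiplication map $\prod_{\alpha\in J}\SL_2^{(\alpha)}\to G_J^{\mathrm{der}}$ is an isomorphism.

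Next I analyze the center $Z:=Z(G_J)=\bigcap_{\alpha\in J}\ker\alpha\subset T$. The saturation of $\mathbb{Z}S$ in $X(T)$ (the hypothesis that $Z(G)$ is connected) together with the fact that $\mathbb{Z}J$ is a direct summand of $\mathbb{Z}S$ shows that $Z$ is a connected torus. Since $Z$ is central in $G_J$ and $Z\cap G_J^{\mathrm{der}}=\prod_\alpha \alpha^\vee(\mu_2)\cong\mu_2^J$ is the center of $G_J^{\mathrm{der}}$, the multiplication map yields $G_J\simeq (Z\times G_J^{\mathrm{der}})/\mu_2^J$ with $\mu_2^J$ embedded anti-diagonally. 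The lemma then reduces to constructing a splitting $Z\simeq T'_J\times\prod_{\alpha\in J}Z_\alpha$ with each $Z_\alpha\simeq \mathbb{G}_m$ such that the inclusion $\alpha^\vee(\mu_2)\hookrightarrow Z$ factors through $Z_\alpha$ as the standard $\mu_2\subset\mathbb{G}_m$; granting this, $(Z\times G_J^{\mathrm{der}})/\mu_2^J\cong T'_J\times\prod_\alpha (Z_\alpha\times\SL_2^{(\alpha)})/\mu_2^{(\alpha)}=T'_J\times\GL_2^J$.

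To build the splitting, I would use the fundamental weights $\tilde\lambda_\alpha\in X(T)$ afforded by $G^{\mathrm{der}}$ being simply connected (characterized by $\langle\beta^\vee,\tilde\lambda_\alpha\rangle=\delta_{\alpha\beta}$ for $\beta\in S$); their reductions $\bar\lambda_\alpha\in X(Z)=X(T)/\mathbb{Z}J$ define a map $Z\to\mathbb{G}_m^J$ with the correct behaviour on $\mu_2^J$. The main obstacle will be to verify that $\sum_\alpha\mathbb{Z}\bar\lambda_\alpha$ is a \emph{saturated} sublattice of $X(Z)$, so that this map is surjective with connected kernel $T'_J$; if the naive choice fails, one perturbs the $\tilde\lambda_\alpha$'s by characters of $G^{\mathrm{ab}}$, and it is here that the connected-center hypothesis on $G$ provides the room needed to make such a perturbation effective.
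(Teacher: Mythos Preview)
The paper does not actually prove this lemma; it is stated with a citation to Lemma~3.1.4 of \cite{bh} and used as a black box. So there is no proof in the paper to compare your proposal against.

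That said, your overall architecture is sound and is essentially what one finds in Breuil--Herzig: identify $G_J^{\mathrm{der}}$ with a product of commuting $\SL_2$'s using orthogonality and simple-connectedness, show the center $Z(G_J)$ is a connected torus via the connected-center hypothesis, and then split off a complement $T'_J$ to the $\mu_2^J$ so that each $(\mathbb{G}_m\times\SL_2)/\mu_2$ becomes a $\GL_2$. Two small points are worth flagging. First, your parenthetical justification that ``$\alpha\pm\beta$ cannot be written as a positive or negative combination of simple roots'' is not the right reason: $\alpha+\beta$ certainly \emph{is} a non-negative combination of simple roots; what you need is the $\beta$-string argument (orthogonality gives $p=q$, and $\alpha-\beta\notin R$ forces $p=0$, hence $q=0$). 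Second, you are right that the delicate step is the saturation of $\sum_{\alpha\in J}\mathbb{Z}\bar\lambda_\alpha$ in $X(Z)$, and your idea of perturbing by characters of $G^{\mathrm{ab}}$ is exactly the move Breuil--Herzig make; but as written your argument stops at ``provides the room needed'' without actually exhibiting the perturbation, so this remains a genuine (if routine) gap to close.
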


We use this lemma in the construction of $\Pi(\rho)^{ord}$ which we define as a sum over certain induced representations of $G_J(\mb{Q}_p)$. We construct representations of $G_J(\mb{Q}_p)$ by using the $p$-adic local Langlands correspondence for $\GL_2(\mb{Q}_p)$.

\subsection{Ordinary part of the p-adic local Langlands correspondence}

Let $E$ be a finite extension of $\mb{Q}_p$ with ring of integers $\mc{O}$ and let $k$ be its residue field. We fix also a uniformiser $\varpi$. Let $A$ be a complete local Noetherian $\mc{O}$-algebra with residue field $k$.

\medskip

We have
$$T(\mb{Q}_p) = \Hom _{\Spec (\mb{Q}_p)}(\Spec(\mb{Q}_p), \Spec (\mb{Q}_p[X(T)])) = \Hom _{\mb{Z}} (X(T), \mb{Q}_p ^{\times}) = $$
$$= \Hom _{\mb{Z}}(X(T), \mb{Z}) \otimes _{\mb{Z}} \mb{Q}_p ^{\times} = X(\hat{T}) \otimes _{\mb{Z}} \mb{Q}_p ^{\times}$$
To a continuous character
$$\hat{\chi} : \Gal(\bar{\mb{Q}}_p / \mb{Q}_p) \twoheadrightarrow \Gal(\bar{\mb{Q}}_p / \mb{Q}_p)^{ab} \ra \hat{T}(A)$$
we can associate a continuous character $\chi: T(\mb{Q}_p) \ra A^{\times}$ by taking the composite of the maps
$$T(\mb{Q}_p) \simeq X(\hat{T}) \otimes _{\mb{Z}} \mb{Q}_p ^{\times} \hookrightarrow X(\hat{T}) \otimes _{\mb{Z}} \Gal(\bar{\mb{Q}}_p / \mb{Q}_p)^{ab} \ra X(\hat{T}) \otimes _{\mb{Z}} \hat{T} (A) \ra A^{\times}$$
where the first injection comes from the local class field theory.

\medskip

We define the $p$-adic cyclotomic character $\epsilon : G_{\mb{Q}_p} \ra A^{\times}$ by composing the standard $p$-adic cyclotomic character which takes values in $\mc{O} ^{\times}$ with the inclusion $\mc{O} ^{\times} \hookrightarrow A ^{\times}$. By the local class field theory we can also consider it as a character of $\mb{Q}_p ^{\times}$ which we tacitly do in what follows.

\medskip

Let us consider a continuous homomorphism
$$\rho : \Gal (\bar{\mb{Q}} _p / \mb{Q} _p) \ra \hat{G} (A)$$
\begin{defi}
We say that $\rho$ is \textbf{triangular} when it takes values in our fixed Borel $\hat{B}(A)$ of $\hat{G}(A)$. 
\end{defi}
We let $C_{\rho} \subset R ^{+ \vee}$ be the smallest closed subset such that $\hat{B} _{C_{\rho}}(A)$ contains all the $\rho (g)$ for $g \in \Gal(\bar{\mb{Q}}_p / \mb{Q}_p)$ (compare with Lemma 2.3.1 of \cite{bh}). Thus $\rho$ factorises
 via $\hat{B}_{C_{\rho}}(A)$
$$\rho :  \Gal (\bar{\mb{Q}} _p / \mb{Q} _p) \ra \hat{B} _{C_{\rho}}(A) \subset \hat{B}(A) \subset \hat{G} (A)$$
We associate a character $\hat{\chi} _{\rho}$ to $\rho$ by composing $\rho$ with the natural surjection
$$\hat{\chi} _{\rho} : \Gal(\bar{\mb{Q}}_p / \mb{Q}_p) \ra \hat{B}_{C_{\rho}}(A) \twoheadrightarrow \hat{T}(A)$$
We attach to $\hat{\chi}_{\rho}$ a continuous character $\chi _{\rho} : T(\mb{Q}_p) \ra A ^{\times}$ by the local class field theory as above.

\begin{defi} We say that a triangular $\rho$ is \textbf{generic} if $\alpha ^{\vee} \circ \hat{\chi} _{\rho} \notin \{1,\epsilon, \epsilon ^{-1}\}$ for all $\alpha \in R^+$ (or equivalently all $\alpha \in R$). The same definition applies to the reduction $\bar{\rho}$ of $\rho$.




\end{defi}

In what follows we will consider only triangular representations $\rho$. We assume that $\bar{\rho}$ is generic.

We now construct several representations of $G(\mb{Q}_p)$ over $A$ attached to $\rho$. Let $I \subset S^{\vee}$ be a subset of pairwise orthogonal roots. We shall firstly construct an admissible continuous representation $\tilde{\Pi}(\rho)_I$ of $G_{I ^{\vee}}(\mb{Q}_p)$ over $A$. We imitate the proof of Proposition 3.3.3 in \cite{bh}, though we present a simplified construction, because we do not need to show unicity of $\tilde{\Pi}(\rho)_I$. Only later on and under additional assumptions we will show that we retrieve the construction of Breuil and Herzig over fields. Then we obtain a representation $\Pi(\rho)^{ord}$ of $G(\mb{Q}_p)$ over $A$, which generalizes the construction of Breuil and Herzig over fields, and which we define as a direct limit of $\Pi(\rho)_I$ over different $I$ (where $\Pi(\rho)_I$ is simply $\tilde{\Pi}(\rho)_I$ induced to $G(\mb{Q}_p)$). In particular, we shall consider a representation $\Pi(\rho)_{\emptyset} = (\Ind _{B(\mb{Q}_p)} ^{G(\mb{Q}_p)} \chi _{\rho}  \cdot (\epsilon ^{-1} \circ \theta))^{\mc{C}^0}$ which we use for the proof of the pro-modular Fontaine-Mazur conjecture. All these representations are functorial in $A$ and hence behave well with respect to reduction modulo prime ideals.



\medskip



If $\beta \in I ^{\vee}$ and $\chi _{\beta} : T _{\beta} (\mb{Q}_p) \ra A^{\times}$ is a continuous character, we define 
$$\Pi _{\beta}(\chi _{\beta}) = \left( \Ind ^{\GL_2(\mb{Q}_p)} _{\left(\begin{smallmatrix} * & 0 \\ * & * \end{smallmatrix}\right)} \chi _{\beta} \cdot (\epsilon ^{-1} \circ \theta) _{| T_{\beta}(\mb{Q}_p)} \right) ^{\mc{C} ^0}$$
This is a representation of $\GL_2(\mb{Q}_p)$ which we use as a building block. We let $\rho _{\beta}: \Gal(\bar{\mb{Q}}_p / \mb{Q}_p) \ra \GL_{2,\beta ^{\vee}}(A)$ be the representation which we get by composing $\rho: \Gal(\bar{\mb{Q}}_p / \mb{Q}_p) \ra \hat{B}(A)$ with $\hat{B}(A) \ra \hat{B}_{\beta}(A) \ra \GL_{2,\beta ^{\vee}}(A)$. We define $\mc{E}_{\beta}$ as the representation attached to the 2-dimensional Galois representation $\rho _{\beta}$ by the p-adic local Langlands correspondence for $\GL_2(\mb{Q}_p)$. In order to have a functorial construction we fix a quasi-inverse $\MF ^{-1}$ to the Colmez functor $\MF$ for $\GL_2(\mb{Q}_p)$ (we use the notation of Emerton from \cite{em2}). For any $\beta$ it sends a lifting of $\bar{\rho} _{\beta}$ to a lifting of $\bar{\pi} _{\beta}$ with a central character (where $\bar{\pi} _{\beta}$ is the smooth representation over $k$ corresponding to $\bar{\rho} _{\beta}$ by the mod $p$ local Langlands correspondence). Then we define
$$\mc{E}_{\beta} = \MF ^{-1}(\rho _{\beta})$$
We remark that over $k$ this is an extension of $\Pi _{\beta}(s_{\beta}(\chi _{\rho|T_{\beta}(\mb{Q}_p)}))$ by $\Pi _{\beta}(\chi _{\rho |T_{\beta}(\mb{Q}_p)})$ because $\rho _{\beta}$ is lower-triangular with the appropiate character on the diagonal (see Proposition 3.4.2 in \cite{em2}). 

\medskip

Let $\chi _{\rho, I ^{\vee}} ' = \chi _{\rho |T_{I ^{\vee}}' (\mb{Q}_p)}$. We define an admissible continuous representation of $T ' _{I ^{\vee}}(\mb{Q}_p) \times \GL_2(\mb{Q}_p) ^{I ^{\vee}}$
$$\tilde{\Pi}(\rho)_I = \chi _{\rho, I ^{\vee}} ' \cdot (\epsilon ^{-1} \circ \theta _{|T ' _{I ^{\vee}}(\mb{Q}_p)}) \otimes _{A} (\widehat{\otimes} _{\beta \in I ^{\vee}} \mc{E} _{\beta})$$
This is exactly the representation we look for. 

\medskip

We set
$$\Pi(\rho)_I = \left( \Ind ^{G(\mb{Q}_p)} _{B^-(\mb{Q}_p)G_{I ^{\vee}}(\mb{Q}_p)} \tilde{\Pi}(\rho)_I \right)^{\mc{C}^0}$$
where we view $\tilde{\Pi}(\rho)_I$ as a continuous representation of $B^-(\mb{Q}_p)G_{I ^{\vee}}(\mb{Q}_p)$ by inflation. By the proposition above and by Theorem 3.1.1 in \cite{bh} (which holds in our setting verbatim), the representation $\Pi(\rho)_I$ of $G(\mb{Q}_p)$ is admissible and continuous. 

\medskip

We now use an argument similar to the one of Breuil-Herzig appearing before Lemma 3.3.5 in \cite{bh} to construct a direct limit. Following the proof of Proposition 3.4.2 of \cite{em2} we have natural injections of $\Pi _{\beta} (\chi _{\rho |T_{\beta}(\mb{Q}_p)})$ into $\mc{E}_{\beta}$. Indeed, Proposition 3.2.4 of \cite{em2} gives us a natural embedding $\chi _{\rho |T_{\beta}(\mb{Q}_p)} \hookrightarrow \Ord (\mc{E}_{\beta})$, where we have denoted by $\Ord$ the ordinary part functor of Emerton. By adjointness property of $\Ord$ this gives us a $\GL_2(\mb{Q}_p)$-equivariant injection  $\Pi _{\beta} (\chi _{\rho |T_{\beta}(\mb{Q}_p)}) \hookrightarrow \mc{E}_{\beta}$. We remark that those injections will be functorial because of Proposition 3.2.4 of \cite{em2} and because we have fixed a quasi-inverse $\MF ^{-1}$.


By Theorem 4.4.6 and Corollary 4.3.5 of \cite{em3}, we have for $I' \subset I$
$$\Hom _{G(\mb{Q}_p)}(\Pi(\rho)_{I'}, \Pi(\rho)_I) \simeq \Hom _{G_{I ^{\vee}}(\mb{Q}_p)}\left( (\Ind ^{G_{I ^{\vee}}(\mb{Q}_p)} _{(B^{-}(\mb{Q}_p)\cap G_{I ^{\vee}}(\mb{Q}_p))G_{I ^{' \vee}}(\mb{Q}_p)} \tilde{\Pi}(\rho)_{I'})^{C^0}, \tilde{\Pi}(\rho)_I \right)$$
Observe that our injections 
$$\Pi _{\beta} (\chi _{\rho |T_{\beta}(\mb{Q}_p)}) \hookrightarrow \mc{E}_{\beta}$$
invoked above induce an injection
$$\Ind ^{G_{I ^{\vee}}(\mb{Q}_p)} _{(B^{-}(\mb{Q}_p)\cap G_{I ^{\vee}}(\mb{Q}_p))G_{I ^{' \vee}}(\mb{Q}_p)} ( \tilde{\Pi}(\rho)_{I'})^{C^0} \hookrightarrow \tilde{\Pi}(\rho)_I$$
and hence also a $G(\mb{Q}_p)$-equivariant injection
$$\Pi(\rho)_{I'} \hookrightarrow \Pi(\rho)_I$$
This actually gives a compatible system of injections, by which we mean that for any $I'' \subset I' \subset I$, the corresponding diagram of injections is commutative. We then define an admissible continuous representation of $G(\mb{Q}_p)$ over $A$ by
$$\Pi(\rho)^{ord} = \varinjlim _I \Pi(\rho)_I$$
where $I$ runs over subsets of $S^{\vee}$ of pairwise orthogonal roots. 






\subsection{Compatibility with the construction of Breuil-Herzig}

We study in this section how $\Pi(\rho)^{ord}$ behaves with respect to reduction modulo prime ideals in $A$. Recall that $G$ and its dual are split, hence we can canonically identify $R ^{\vee}(A)$ and $R ^{\vee}(A/\mf{p})$ for any prime ideal $\mf{p}$ of $A$.

\begin{lemm}\label{lem:reduction}
Let $A \ra A'$ be a morphism of complete local $\mc{O}$-algebras and let $\rho$ be triangular over $A$ with $\bar{\rho}$ generic. Then 
$$\Pi(\rho \otimes _A A')_I \simeq \Pi(\rho)_I \otimes_A A'$$ 
for any subset $I \subset  S^{\vee}$ of pairwise orthogonal roots and
$$\Pi(\rho\otimes_A A')^{\ord} \simeq \Pi(\rho)^{\ord} \otimes_A A'$$
\end{lemm}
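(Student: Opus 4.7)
The strategy is to trace each step of the construction of $\Pi(\rho)_I$ through the base change $A \to A'$ and then to pass to the direct limit for $\Pi(\rho)^{\ord}$. The construction has three layers to check: the local constituents $\mc{E}_\beta$ produced by the $p$-adic Langlands correspondence for $\GL_2(\mb{Q}_p)$; the assembly of $\tilde{\Pi}(\rho)_I$ via a completed outer tensor product with a character of $T'_{I^\vee}(\mb{Q}_p)$; and the continuous parabolic induction from $B^-(\mb{Q}_p) G_{I^\vee}(\mb{Q}_p)$ to $G(\mb{Q}_p)$.

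The character $\chi'_{\rho, I^\vee} \cdot (\epsilon^{-1} \circ \theta_{|T'_{I^\vee}(\mb{Q}_p)})$ is manufactured from $\rho$ by composition with natural transformations (local class field theory, $\theta$, $\epsilon^{-1}$), so it obviously base-changes correctly. For each $\beta \in I^\vee$, the Galois representation $\rho_\beta$ is visibly compatible with base change, and since $\mc{E}_\beta = \MF^{-1}(\rho_\beta)$ for the fixed quasi-inverse of the Colmez functor $\MF$ for $\GL_2(\mb{Q}_p)$, one invokes the compatibility of the $p$-adic local Langlands correspondence for $\GL_2(\mb{Q}_p)$ with base change along maps of complete local Noetherian $\mc{O}$-algebras (which holds because $\MF$ is built out of $(\varphi,\Gamma)$-module constructions that commute with base change, and the genericity of $\bar{\rho}$ rules out the pathological residual situations). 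This gives $\mc{E}_{\beta,\, \rho \otimes_A A'} \simeq \mc{E}_\beta \otimes_A A'$ as admissible continuous representations.

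Next, the completed tensor product $\widehat{\otimes}_{\beta \in I^\vee} \mc{E}_\beta$ over $A$ commutes with $(-) \otimes_A A'$ because each $\mc{E}_\beta$ is an admissible continuous $A$-linear Banach representation and $A \to A'$ is a morphism of complete local Noetherian rings; combined with the preceding step this yields $\tilde{\Pi}(\rho \otimes_A A')_I \simeq \tilde{\Pi}(\rho)_I \otimes_A A'$. The continuous parabolic induction $(\Ind^{G(\mb{Q}_p)}_{B^-(\mb{Q}_p) G_{I^\vee}(\mb{Q}_p)}(-))^{\mc{C}^0}$ likewise commutes with base change: an Iwasawa-type decomposition identifies the induced representation with a space of continuous functions from a compact open subset of $G(\mb{Q}_p)$ into the inducing representation, and for an admissible $A$-linear representation $V$ of a compact open subgroup $K$ one has $\mc{C}(K,V) \otimes_A A' \simeq \mc{C}(K, V \otimes_A A')$ by a standard argument on $\varpi$-adic completions. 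Combining these observations gives the first isomorphism of the lemma.

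Finally, the injections $\Pi(\rho)_{I'} \hookrightarrow \Pi(\rho)_I$ for $I' \subset I$ are obtained by inducing the embeddings $\Pi_\beta(\chi_{\rho | T_\beta(\mb{Q}_p)}) \hookrightarrow \mc{E}_\beta$, which themselves arise functorially from the fixed $\MF^{-1}$ together with Emerton's adjunction between parabolic induction and $\Ord$; hence these injections are also compatible with base change. Passing to the direct limit over subsets $I \subset S^\vee$ of pairwise orthogonal roots yields $\Pi(\rho \otimes_A A')^{\ord} \simeq \Pi(\rho)^{\ord} \otimes_A A'$. The main technical point is the base-change compatibility of $\MF^{-1}$ on triangular (hence possibly non-split) two-dimensional representations in families: once one has fixed a functorial quasi-inverse and imposed the genericity hypothesis on $\bar{\rho}$ (so that the fibers are governed by $p$-adic Langlands in the well-understood generic regime), the remaining assertions are formal manipulations with completed tensor products, continuous induction, and direct limits.
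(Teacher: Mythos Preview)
Your proof is correct and follows essentially the same approach as the paper. The paper's own argument is extremely terse: it notes that $\rho\otimes_A A'$ is triangular, asserts that it suffices to check base-change compatibility of $\tilde\Pi(\rho)_I$, and then invokes the base-change compatibility of the $p$-adic local Langlands correspondence for $\GL_2(\mb{Q}_p)$. You have simply unpacked the implicit steps (compatibility of the character, of the completed tensor product, of continuous parabolic induction, and of the transition maps in the direct limit) that the paper takes for granted; the core input in both arguments is the same, namely that $\MF^{-1}$ commutes with base change.
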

\begin{proof}
Observe that $\rho \otimes _A A'$ is triangular because $\rho$ is. By the definition of $\Pi(\rho) _I$ it is enough to check, that the construction of $\tilde{\Pi}(\rho)_I$ we have given above is compatible with the base change $A \ra A'$. This follows from the fact that the p-adic local Langlands correspondence for $\GL_2(\mb{Q}_p)$ is compatible with the base change $A \ra A'$. 

\end{proof}







To put more content into this lemma let us specialize to the totally indecomposable case.

\begin{defi}
We say that $\rho$ is \textbf{totally indecomposable} if $C_{\rho} = R^{+ \vee}$ is minimal among all conjugates of $\rho$ by $B$ (equivalently, $C_{b\rho b^{-1}} = R^{+ \vee}$ for all $b\in B$).
\end{defi}




\medskip

We prove now that for $\GL_n$ we retrieve the construction of Breuil-Herzig after reducing modulo $\mf{p}$. Before continuing, we shall give another characterisation of totally indecomposable representations valable for $G = \GL_n$.

\begin{lemm}\label{lem:tot-indec}
Let $\rho: \Gal(\bar{\mb{Q}}_p / \mb{Q}_p) \ra \GL_n(A)$ be a triangular representation and $A$ be a field. The following conditions are equivalent:
\begin{enumerate}
\item All semi-simple subquotients of $\rho$ are simple (equivalently, the graded pieces of the filtration by the socle are irreducible).

\item $B$ is the unique Borel that contains the image of $\rho$  (equivalently, the image of $\rho$ fixes a unique Borel $B$ (flag)). Here $B$ is the Borel we have fixed before in the definition of being triangular.

\item $\rho$ is totally indecomposable.
\end{enumerate}
\end{lemm}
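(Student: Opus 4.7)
The plan is to prove the two equivalences $(1) \Leftrightarrow (2)$ and $(2) \Leftrightarrow (3)$, using throughout the $\GL_n$-specific bijection between Borel subgroups of $\GL_n$ and complete flags in the standard $n$-dimensional representation $V$; under this bijection, (2) becomes the statement that the image of $\rho$ preserves a unique complete flag in $V$.

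For $(1) \Rightarrow (2)$ I would induct on $\dim V$. The socle $\soc^1(V)$ is one-dimensional by (1), and any $\rho$-stable line is irreducible, hence lies in the socle, hence equals $\soc^1$; so every preserved complete flag begins with $\soc^1$. Since $\gr^i(V/\soc^1) = \gr^{i+1}(V)$, the quotient $V/\soc^1$ inherits hypothesis (1), and the inductive hypothesis forces the remainder of any preserved flag to coincide with the socle filtration of $V/\soc^1$. Conversely, for $(2) \Rightarrow (1)$: if some $\gr^i$ has dimension $\geq 2$, then being semisimple it contains at least two distinct $\rho$-stable lines, which lift to at least two distinct $\rho$-stable subspaces strictly between $\soc^{i-1}$ and $\soc^i$; completing these in the same way on every other level yields two distinct preserved complete flags, contradicting (2).

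For $(2) \Rightarrow (3)$ I argue contrapositively. If $\rho$ fails to be totally indecomposable, some conjugate $b\rho b^{-1}$ (with $b \in B$) has $C := C_{b\rho b^{-1}} \subsetneq R^{+\vee}$. In type $A$, any closed proper subset of $R^{+\vee}$ must omit some simple coroot $\alpha_i^\vee$, since simple coroots generate $R^{+\vee}$ under iterated sums staying inside $R^{+\vee}$. Omission of $\alpha_i^\vee$ means that the $(i,i+1)$-entry of $b\rho(g) b^{-1}$ vanishes for all $g$, so the image of $b\rho b^{-1}$ is simultaneously contained in $B$ and in the adjacent Borel $s_{\alpha_i} B s_{\alpha_i}^{-1}$; conjugating back by $b^{-1}$ puts the image of $\rho$ in two distinct Borels, violating (2). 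For $(3) \Rightarrow (2)$, again contrapositively: if a second Borel $B' = gBg^{-1}$ contains the image of $\rho$, the Bruhat decomposition $g = b_1 w b_2$ with $b_1, b_2 \in B$ and $w \in W \setminus \{1\}$ gives $B' = b_1 w B w^{-1} b_1^{-1}$, so $b_1^{-1}\rho b_1$ takes values in $B \cap w B w^{-1}$, which is generated by $T$ and the root subgroups $U_\alpha$ for $\alpha \in R^+ \cap wR^+ \subsetneq R^+$; hence $C_{b_1^{-1}\rho b_1} \subsetneq R^{+\vee}$, contradicting total indecomposability.

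The main subtlety is handling the \emph{minimality among conjugates} clause in the definition of totally indecomposable: each implication involving (3) must explicitly exhibit a conjugate of $\rho$ realising the strict inclusion $C \subsetneq R^{+\vee}$, and the Bruhat decomposition is precisely the tool that converts the existence of a second Borel containing the image into such a conjugate. The remaining arguments are elementary socle-filtration bookkeeping, together with the type-$A$ observation that any closed subset of $R^+$ containing all simple roots equals all of $R^+$.
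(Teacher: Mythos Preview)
Your proof is correct and follows essentially the same route as the paper's. For $(1)\Leftrightarrow(2)$ the paper argues both directions contrapositively (two distinct flags yield a $2$-dimensional semisimple subquotient, and conversely a reducible socle layer yields two flags), while you do $(1)\Rightarrow(2)$ by induction on $\dim V$; these are the same idea. For $(2)\Leftrightarrow(3)$ the paper conjugates $B'$ by some $b\in B$ into a Borel $w(B)$ containing $T$ and reads off $C_{b\rho b^{-1}}\subset R^{+\vee}\cap w(R^{+\vee})$, which is exactly your Bruhat-decomposition argument spelled out; and for the reverse direction both you and the paper use that a proper closed subset of $R^{+\vee}$ omits a simple coroot, whence $s_\alpha(B)$ is a second Borel containing the image. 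Your explicit handling of the $B$-conjugate in the definition of total indecomposability is slightly more careful than the paper's, which tacitly uses that condition~(2) is $B$-conjugation invariant.
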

\begin{proof}
(1. $\Leftrightarrow$ 2.) If there exists $\soc _{j+1} / \soc _j$ which is not irreducible then we can construct two distinct flags which are stable by the image of $\rho$. On the other hand, if there exists two distinct flags fixed by the image of $\rho$, say 
$$ V_1 \subset V_2 \subset ... \subset V_n$$
and
$$ V_1 ' \subset V_2 ' \subset ... \subset V_n '$$
and let $j$ be the smallest index such that $V_j \not = V_j '$. Then $(V_j + V_j ') / V_{j-1}$ is of dimension 2 and semi-simple, hence $\rho$ is not totally indecomposable.  

\medskip

\noindent (2. $\Leftrightarrow$ 3.) Suppose that $\rho$ stabilizes another Borel $B'$ (apart from $B$). Let $b\in B$ be an element which conjugates $B'$ into a Borel containing the maximal torus $T$. This Borel $bB' b^{-1}$ is of the form $w(B)$ for some $w$ in the Weyl group. Hence we see that $C_{b\rho b^{-1}}$ is contained in the intersection of $R^{+ \vee}$ and $w(R^{+ \vee})$, and in particular is different from $R^{+ \vee}$.

If $C_{\rho}$ is different from  $R^{+ \vee}$, then there exists a positive simple root $\alpha$ which does not belong to $C_{\rho}$. It follows that $s_{\alpha}(C_{\rho})$ is contained in $R^{+ \vee}$ and hence the image of $\rho$ is contained in $s_{\alpha}(B)$.
\end{proof}

\begin{lemm}\label{lem:tot-reduction}
Let $\rho: \Gal(\bar{\mb{Q}}_p / \mb{Q}_p) \ra \GL_n(\mc{O})$ be a triangular representation such that $\bar{\rho}$ is triangular, generic and totally indecomposable. Then $\rho _E = \rho \otimes _{\mc{O}} E$ is also totally indecomposable and generic.
\end{lemm}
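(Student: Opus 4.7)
\emph{Genericity} of $\rho_E$ will follow immediately by reduction modulo $\varpi$: if $\alpha^{\vee}\circ\hat{\chi}_{\rho_E} \in \{1,\epsilon,\epsilon^{-1}\}$ for some $\alpha\in R^+$, the same identity would hold for $\bar{\rho}$ after reducing to $k^{\times}$, contradicting the hypothesis. The substance of the lemma is therefore \emph{total indecomposability} of $\rho_E$, which I plan to deduce from total indecomposability of $\bar{\rho}$ by contradiction, using the equivalences of Lemma~\ref{lem:tot-indec}.

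So suppose $\rho_E$ is not totally indecomposable. By Lemma~\ref{lem:tot-indec}(2) the image of $\rho_E$ stabilises some full flag $V'_{\bullet}$ distinct from the flag $V_{\bullet}$ coming from the triangularity of $\rho_E$. Let $j$ be the smallest index with $V_j \neq V'_j$. Exactly as in the proof of Lemma~\ref{lem:tot-indec}, the subquotient $W := (V_j+V'_j)/V_{j-1}$ is then two-dimensional and semisimple, isomorphic as a Galois representation to $\chi_j \oplus \chi'_j$, where $\chi_j$ and $\chi'_j$ are the characters on the sublines $V_j/V_{j-1}$ and $V'_j/V'_{j-1}$. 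Genericity of $\bar{\rho}$ forces the reductions $\bar{\chi}_1,\ldots,\bar{\chi}_n$ to be pairwise distinct (hence so are $\chi_1,\ldots,\chi_n$), and a multiplicity count in the socle of $\rho_E/V_{j-1}$ then forces $\chi'_j \neq \chi_j$; in particular $\bar{\chi}_j \neq \bar{\chi}'_j$.

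The final step is to descend $W$ to a two-dimensional semisimple subquotient already inside $\bar{\rho}$, contradicting total indecomposability of $\bar{\rho}$ (Lemma~\ref{lem:tot-indec} again). Set $L := \mc{O}^n$, $L_i := V_i \cap L$, and $M := (V_j+V'_j) \cap L$, all saturated in $L$. Then $M/L_{j-1}$ is a rank-two saturated $\mc{O}$-sublattice of $L/L_{j-1}$ sitting inside $W$ over $E$, and its reduction modulo $\varpi$ is a two-dimensional subquotient of $\bar{\rho}$. Because $\bar{\chi}_j \neq \bar{\chi}'_j$, one may pick $g_0 \in G_{\mb{Q}_p}$ with $\chi_j(g_0)-\chi'_j(g_0) \in \mc{O}^{\times}$; the integral projectors $P := (\rho(g_0)-\chi'_j(g_0))/(\chi_j(g_0)-\chi'_j(g_0))$ and $1-P$ then split $M/L_{j-1}$ as a direct sum of two rank-one $\mc{O}$-submodules on which Galois acts by $\chi_j$ and $\chi'_j$ respectively, and reduction yields $\bar{M}/\bar{L}_{j-1} \simeq \bar{\chi}_j \oplus \bar{\chi}'_j$, the required semisimple subquotient of $\bar{\rho}$.

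I expect the main obstacle to be the integral-lattice bookkeeping in this last step: one has to verify that $L_i = V_i \cap L$ is saturated and that its reduction recovers the standard flag of $\bar{\rho}$, that $M/L_{j-1}$ is saturated of rank two in $L/L_{j-1}$, and that the projectors above give a genuine $\mc{O}$-module direct sum decomposition whose factors carry the right Galois actions. These are all formal once $\chi_j(g_0)-\chi'_j(g_0)$ is a unit in $\mc{O}$, and this is precisely the point where the genericity of $\bar{\rho}$ is used.
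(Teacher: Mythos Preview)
Your argument is correct, though it takes a longer route than the paper's. Both proofs intersect with the lattice $\mc{O}^n$ and invoke total indecomposability of $\bar\rho$, but at different stages. The paper reduces the \emph{entire} second flag: intersecting each $V'_i$ with $\mc{O}^n$ yields a $\rho$-stable integral full flag whose reduction modulo $\varpi$ is a $\bar\rho$-stable full flag in $k^n$, hence the standard one by total indecomposability of $\bar\rho$. Thus every $\chi'_i$ lifts $\bar\chi_i$ and therefore equals $\chi_i$ by multiplicity one in the semisimplification, so the contradiction already appears in characteristic zero: at the first index $j$ with $V_j\neq V'_j$, the two-dimensional $(V_j+V'_j)/V_{j-1}$ would exhibit $\chi_j$ with multiplicity two. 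You instead first deduce $\chi'_j\neq\chi_j$ in characteristic zero and then push only the two-dimensional piece $W$ down to $k$ via explicit integral idempotents, obtaining the contradiction from Lemma~\ref{lem:tot-indec}(1) applied to $\bar\rho$. The paper's line is shorter and avoids the projector bookkeeping entirely; your line is more hands-on but equally valid. One small point: the implication ``$\chi'_j\neq\chi_j\Rightarrow\bar\chi'_j\neq\bar\chi_j$'' is not quite an ``in particular''; it uses that $\chi'_j$ is a Jordan--H\"older constituent of $\rho_E$, hence equals some $\chi_m$ with $m\neq j$, whence $\bar\chi'_j=\bar\chi_m\neq\bar\chi_j$ by the pairwise distinctness you already recorded.
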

\begin{proof}
The statement about genericity of $\rho _E$ is clear. Let us prove that it is totally indecomposable. Let us denote by $\bar{\chi} _j$ characters appearing on the diagonal of $\bar{\rho}$ which we have supposed to be pairwise distinct hence linearly independent. Let $B$ be a Borel in $\GL_n(E)$ containing the image of $\rho$. It corresponds to a flag
$$V_1 \subset V_2 \subset ... \subset V_n = E^n$$
By intersection with $\mc{O}^n$ we obtain a flag
$$\omega _1 \subset \omega _2 \subset ... \subset \omega _n = \mc{O} ^n$$
of $\mc{O} ^n$ stable by the image of $\rho$ which reduces to the standard flag modulo $\mf{m}$ by the hypothesis that $\bar{\rho}$ is totally indecomposable. In particular, we see that $G$ acts on $V_i / V_{i-1}$ by a character $\chi _i$ with values in $\mc{O}^{\times}$ which lifts the character $\bar{\chi} _i$. By genericity of $\bar{\rho}$, the characters $\chi _i$ are mutually distinct and each appears in the semi-simplification of $\rho$ with multiplicity $1$.

Suppose now that we have another Borel $B'$ different from $B$ and stable by the image of $\rho$ with the associated flag
$$V_1 ' \subset V_2 ' \subset ... \subset V_n '$$
Let $i$ be the smallest index $i$ such that $V_i ' \not = V_i$. Then $G$ acts on the $2$-dimensional subquotient $(V_i + V_i ') / V_{i-1}$ by the character $\chi _i$, which contradicts the fact that $\chi _i$ appears with multiplicity $1$. Hence $B' =B$ and we see that $\rho$ is totally indecomposable by Lemma \ref{lem:tot-indec}.
\end{proof}

\begin{prop}\label{prop:comp}
Suppose that $\bar{\rho}$ is generic, triangular and totally indecomposable and $\rho$ is triangular. Then for any morphism $A \ra E'$ (where $E'$ is a finite extension of $E$), the $E'$-Banach representation $\Pi(\rho)^{\ord} \otimes _A E'$ is the representation $\Pi(\rho \otimes_A E')^{\ord}$ of Breuil and Herzig.
\end{prop}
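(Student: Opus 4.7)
The plan is to reduce the assertion to the $\GL_2$-level building blocks, use the compatibility of the $p$-adic local Langlands correspondence with base change, and assemble everything via a filtered colimit argument, invoking Lemmas \ref{lem:reduction} and \ref{lem:tot-reduction} as the main ingredients.

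First, by Lemma \ref{lem:reduction} applied to the morphism $A \to E'$, one has for every subset $I \subset S^\vee$ of pairwise orthogonal roots an isomorphism $\Pi(\rho)_I \otimes_A E' \simeq \Pi(\rho \otimes_A E')_I$. Since $\Pi(\rho)^{\ord} = \varinjlim_I \Pi(\rho)_I$ and the base change $- \otimes_A E'$ commutes with filtered colimits in the relevant category of admissible continuous Banach representations, we get
$$ \Pi(\rho)^{\ord} \otimes_A E' \;\simeq\; \varinjlim_I \Pi(\rho \otimes_A E')_I. $$
So we are reduced to identifying the right-hand side with the Breuil--Herzig representation attached to $\rho_{E'} := \rho \otimes_A E'$.

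Second, I would apply (the obvious extension of) Lemma \ref{lem:tot-reduction} to conclude that $\rho_{E'}$ is again triangular, generic and totally indecomposable. One factors $A \to E'$ through the quotient by the prime ideal $\mf{p} = \ker(A \to E')$, lifts to an $\mc{O}'$-valued representation with $\mc{O}'$ a finite extension of $\mc{O}$, and repeats the flag argument of Lemma \ref{lem:tot-reduction} verbatim; genericity is preserved because the diagonal characters reduce to the distinct, generic characters of $\bar{\rho}$. This places us inside the hypotheses of Section 3.3 of \cite{bh}, so the construction of Breuil and Herzig applies to $\rho_{E'}$.

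Third, I would match the building blocks. In both constructions the $\GL_2(\mb{Q}_p)$-factor $\mc{E}_\beta$ is the representation attached to the two-dimensional piece $\rho_{E',\beta}$ by the $p$-adic local Langlands correspondence for $\GL_2(\mb{Q}_p)$ (with prescribed central character) — here through the fixed quasi-inverse $\MF^{-1}$ of Colmez, and in \cite{bh} through the same correspondence. Hence $\tilde{\Pi}(\rho_{E'})_I$ as constructed above agrees with its Breuil--Herzig counterpart, and after $\mc{C}^0$-parabolic induction the same holds for $\Pi(\rho_{E'})_I$. The transition maps of the direct system are in both cases induced, via adjointness of Emerton's $\Ord$ functor together with Proposition 3.2.4 of \cite{em2}, from the canonical embeddings $\chi_{\rho_{E'}|T_\beta(\mb{Q}_p)} \hookrightarrow \Ord(\mc{E}_\beta)$; so the direct systems are isomorphic and passing to $\varinjlim_I$ yields the desired identification with $\Pi(\rho_{E'})^{\ord}$ in the sense of Breuil--Herzig.

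The main obstacle I anticipate is the third step: since the paper explicitly avoids proving uniqueness of $\tilde{\Pi}(\rho)_I$, one must check by hand that our simplified construction and the Breuil--Herzig one yield \emph{the same} representation and \emph{the same} transition maps over the field $E'$. The total indecomposability of $\bar{\rho}$ (propagated to $\rho_{E'}$ in the second step) is what rigidifies the situation: it forces the ordinary filtration on $\rho_{E'}$ to be unique, which pins down the characters $\chi_{\rho_{E'}|T_\beta(\mb{Q}_p)}$ and hence makes the $\Ord$-adjunction embeddings canonical, so that both direct systems genuinely coincide rather than merely being abstractly isomorphic.
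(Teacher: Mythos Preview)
Your first two steps match the paper's argument: use Lemma~\ref{lem:reduction} to reduce to a morphism $A\to E'$ (the paper reduces further to $A=\mc{O}_{E'}$), and then invoke Lemma~\ref{lem:tot-reduction} to see that $\rho_{E'}$ is again generic and totally indecomposable.

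The divergence is in your third step. You propose to match the two constructions by hand, identifying the $\GL_2$-blocks $\mc{E}_\beta$ and the transition maps directly. The paper does \emph{not} do this. Instead it observes that the remaining issue is whether $\rho_{E'}$ is a \emph{good conjugate} of itself in the sense of Definition~3.2.4 of \cite{bh}; this follows from condition (3) of Lemma~\ref{lem:tot-indec} (since $C_{\rho_{E'}}=R^{+\vee}$ is already minimal among $B$-conjugates). Then Lemma~3.3.5 of \cite{bh} is invoked as a black box to conclude that the Breuil--Herzig representation coincides with the one built here. In other words, the matching of building blocks and transition maps that you flag as the main obstacle is precisely the content of Lemma~3.3.5 of \cite{bh}, and the paper simply quotes it rather than reproving it.

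Your sketch of how total indecomposability ``rigidifies the situation'' is morally the reason Lemma~3.3.5 of \cite{bh} applies, but as written it is not a proof: you have not verified that Breuil--Herzig's $\tilde{\Pi}(\rho_{E'})_I$ (constructed in \cite{bh} via a unicity statement, Proposition~3.3.3) literally agrees with the explicit object $\chi'_{\rho,I^\vee}\cdot(\epsilon^{-1}\circ\theta)\otimes\widehat{\otimes}_\beta \MF^{-1}(\rho_\beta)$, nor that their transition maps coincide with yours. The clean way to close this is exactly the paper's route: check the ``good conjugate'' condition and appeal to Lemma~3.3.5 of \cite{bh}.
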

\begin{proof}
By Lemma \ref{lem:reduction} we can suppose that $A = \mc{O} _{E'}$. Observe that $\rho _{E'} = \rho \otimes _{\mc{O}_{E'}} E'$ is generic and totally indecomposable by Lemma \ref{lem:tot-reduction}. To finish the proof we have to show that $\rho _{E'}$ is a good conjugate of itself (Definition 3.2.4 in \cite{bh}). This follows from (3) of Lemma \ref{lem:tot-indec} and we conclude by Lemma 3.3.5 of \cite{bh}.
\end{proof}


\subsection{Universal ordinary modular representation}

In this subsection we will apply the formalism developed above to a particular example. We consider triangular deformations of modular representations and our goal is to define $\Pi(\rho _{\mf{m}, w}) ^{ord}$, where $\rho _{\mf{m}, w}$ is a certain universal modular Galois representation at a place $w|p$.

\medskip

We take up the setting of Section 2. For each place $w|p$ of $F^+$ we choose a place $\tilde w$ of $F$, so as to get an identification 
$$G(\mb{Q}_p)\simeq \prod_{w|p} \GL_n(F_{\tilde w})=  \GL_n(\mb{Q}_p)^f$$ 
where $f=[F^+:\mb{Q}]$. We denote by $B$ the upper triangular Borel subgroup and we have $B\simeq B_n(\mb{Q}_p)^f$.

\medskip

We will now define a certain quotient $\mb{T}(K^p)^{\ord}$ of $\mb{T}(K^p)$. There are two equivalent approaches for this. 

\medskip

Firstly, we may follow Geraghty who introduced in 2.4 \cite{ge} a certain direct factor $\mb{T}(K^pK_p(n))^{\ord}$ of $\mb{T}(K^p K_p(n))$ where $K_p(n)$ denotes the group of matrices in $\GL_n(\mb{Z}_p)^f$ that reduce to a unipotent upper-triangular matrix mod $p^n$. More precisely, in loc. cit is defined the algebra $\mb{T}_{\lambda} ^{T,\ord}(U(l^{n,n}),\mc{O})$. There, $\lambda$ is a dominant weight for $G$ (but we take $\lambda=0$ in this case), $U(l^{n,n})$ is our $K^pK_p(n)$ (our $p$ is denoted by $l$), $T$ is our $\Sigma$. Beware that Geraghty's algebra contains diamond operators at places above $p$ (his $l$), in contrast with ours. So our $\mb{T}(K^pK_p(n))^{\ord}$ is the image of $\mb{T}(K^p)$ in Geraghty's $\mb{T}_0 ^{T,\ord}(U(l^{n,n}),\mc{O})$. When $n$ varies, these constructions are compatible and we may take the projective limit $\mb{T}^{T,\ord}_0(U(l^\infty),\mc{O})$. We get a quotient $\mb{T}(K^p)^{\ord}$ as the image of the natural map $\mb{T}(K^p) \ra \mb{T}^{T,\ord}_0(U(l^\infty),\mc{O})$ in Geraghty's notation on p.14 of loc. cit. 

\medskip

Alternatively, we may use Emerton's ordinary part functor and define 
$$\mb{T}(K^p)^{\ord} := \textrm{image of }\mb{T}(K^p)\textrm{ in }\End_{\mc{O}} (\Ord _B (\widehat{H}^0(K^p)))$$
Note that $\Ord_B( \widehat{H}^0(K^p))$ is a continuous representation of $T(\mb{Q}_p)$ over $\mb{T}(K^p)$ and in particular is a $\mb{T}(K^p)[[T(\mb{Z}_p)]]$-module.
Then Geraghty's algebra can be identified with the image of $\mb{T}(K^p)[[T(\mb{Z}_p)]]$ in $\End_{\mc{O}} (\Ord_B (\widehat{H}^0(K^p)))$ (compare with 5.6 of \cite{em2}).

\begin{lemm}
$\mb{T}(K^p)^{\ord}$ is a direct factor of $\mb{T}(K^p)$.
\end{lemm}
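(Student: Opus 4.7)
The plan is to exploit the semilocal structure of $\mb{T}(K^p)$ recorded in Section 2: the ring decomposes as a finite product $\prod_{\mf{m}}\mb{T}(K^p)_{\mf{m}}$ of its localisations at maximal ideals. The only idempotents of such a ring are sums of primitive idempotents, one per maximal ideal, so an ideal is generated by an idempotent if and only if it has the form $\prod_{\mf{m}\in S}\mb{T}(K^p)_{\mf{m}}$ for some subset $S$ of maximal ideals. Since $\mb{T}(K^p)^{\ord}$ being a direct factor of $\mb{T}(K^p)$ is equivalent to the kernel of the canonical surjection $\mb{T}(K^p)\twoheadrightarrow\mb{T}(K^p)^{\ord}$ being generated by an idempotent, the lemma reduces to showing that for each maximal ideal $\mf{m}$ of $\mb{T}(K^p)$ the local factor $\mb{T}(K^p)_{\mf{m}}$ either lies entirely in this kernel or maps isomorphically onto its image in $\mb{T}(K^p)^{\ord}$.

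The first case is immediate: it happens precisely when $\Ord_B(\widehat{H}^0(K^p))_{\mf{m}} = 0$, for then the annihilator of $\Ord_B(\widehat{H}^0(K^p))$ in $\mb{T}(K^p)_{\mf{m}}$ is the full local ring. For the second case one needs faithfulness: assuming $\Ord_B(\widehat{H}^0(K^p))_{\mf{m}} \ne 0$, one must show that $\mb{T}(K^p)_{\mf{m}}$ acts faithfully on it. The key tool is Hida's ordinary idempotent $e_{\ord} = \lim_n U_p^{n!}$ in the larger completed Hecke algebra at parahoric level containing the $U_p$-operators (implicit in Geraghty's construction invoked above). Because $e_{\ord}$ commutes with the spherical Hecke operators generating $\mb{T}(K^p)$, it gives a $\mb{T}(K^p)_{\mf{m}}$-stable decomposition
$$\widehat{H}^0(K^p)_{\mf{m}} = e_{\ord}\widehat{H}^0(K^p)_{\mf{m}} \oplus (1-e_{\ord})\widehat{H}^0(K^p)_{\mf{m}},$$
and the first summand is canonically identified with $\Ord_B(\widehat{H}^0(K^p))_{\mf{m}}$ via Emerton's and Geraghty's comparisons. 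Using that Geraghty's $\mb{T}^{T,\ord}_0(U(l^\infty),\mc{O})$ is by construction a direct factor of the corresponding full Hecke algebra via this idempotent, one transfers the splitting down to $\mb{T}(K^p)_{\mf{m}}$ and deduces the required faithfulness.

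The main obstacle is this faithfulness step, which is not a purely formal consequence of the semilocal decomposition. It relies on the Hida-theoretic input that the ordinary direct summand $e_{\ord}\widehat{H}^0(K^p)_{\mf{m}}$ captures all spherical Hecke eigensystems occurring in the $\mf{m}$-part, so that any spherical operator annihilating it is already zero on $\widehat{H}^0(K^p)_{\mf{m}}$ and hence zero in $\mb{T}(K^p)_{\mf{m}}$. Granting this, the kernel of $\mb{T}(K^p)\twoheadrightarrow\mb{T}(K^p)^{\ord}$ is precisely $\prod_{\mf{m}\,:\,\Ord_B(\widehat{H}^0(K^p))_{\mf{m}} = 0}\mb{T}(K^p)_{\mf{m}}$, which is generated by the complementary idempotent, and the lemma follows.
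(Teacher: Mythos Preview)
Your reduction via the semilocal decomposition is sound, and you correctly isolate the crucial point: for each maximal ideal $\mf{m}$ with $\Ord_B(\widehat{H}^0(K^p))_{\mf{m}}\neq 0$, the local factor $\mb{T}(K^p)_{\mf{m}}$ must act faithfully on this module. However, your argument for this faithfulness has a real gap.

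First, a technical point: the Hida idempotent $e_{\ord}$ does not act on $\widehat{H}^0(K^p)$ itself, since the $U_p$-operators only make sense after taking $N(\mb{Z}_p)$-invariants (equivalently, passing to Iwahori level). So your displayed decomposition of $\widehat{H}^0(K^p)_{\mf{m}}$ is ill-formed; at best one has such a splitting of $\widehat{H}^0(K^p)^{N(\mb{Z}_p)}_{\mf{m}}$. Second, and more seriously, once you correct this you are appealing to the assertion that ``the ordinary direct summand captures all spherical Hecke eigensystems occurring in the $\mf{m}$-part''. This is precisely the statement to be proved, and it does not follow formally from the fact that Geraghty's ordinary algebra is a direct factor of his larger Iwahori-level Hecke algebra: a splitting of a bigger algebra need not descend to a splitting of the image of the spherical subalgebra. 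Concretely, the issue is that the surjection $\mb{T}(K^p)\twoheadrightarrow\mb{T}(K^pN(\mb{Z}_p))$ (image in $\End_{\mc{O}}(\widehat{H}^0(K^p)^{N(\mb{Z}_p)})$) could in principle have a kernel, and you have given no reason why it does not.

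The paper supplies exactly this missing step: it shows $\mb{T}(K^p)=\mb{T}(K^pN(\mb{Z}_p))$ by proving that $\mb{T}(K^p)$ acts faithfully on $\widehat{H}^0(K^p)^{N(\mb{Z}_p)}$. The argument uses that $\mb{T}(K^p)$ is reduced and $\mc{O}$-torsion free, together with the Zariski density of the set of primes $\mf{p}$ for which $\widehat{H}^0(K^p)_E[\mf{p}]_{l.alg}\neq 0$; for any such $\mf{p}$ the $N(\mb{Z}_p)$-invariants of $\widehat{H}^0(K^p)_E[\mf{p}]$ are nonzero because every irreducible locally algebraic representation has nonzero $N(\mb{Z}_p)$-fixed vectors. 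This density input is the genuine content you are missing. Once one has $\mb{T}(K^p)=\mb{T}(K^pN(\mb{Z}_p))$, the Hida splitting at the $N(\mb{Z}_p)$-invariant level immediately gives the lemma, and your semilocal reduction becomes unnecessary.
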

\begin{proof} 
Denote by $\mb{T}(K^p N(\mb{Z}_p))$ the image of $\mb{T}(K^p)$ in $\End_{\mc{O}}(\widehat{H}^0(K^p)^{N(\mb{Z}_p)})$. Note that in Geraghty's notation, $\mb{T}(K^p N(\mb{Z}_p))$ is the image of $\mb{T}(K^p)$ in $\mb{T}^T(U(l^\infty),\mc{O})$. By definition $\mb{T}(K^p)^{\ord}$ is thus a direct factor of $\mb{T}(K^p N(\mb{Z}_p))$. Therefore, we need to show that $\mb{T}(K^p) = T(K^p N(\mb{Z}_p))$. This amounts to prove that $\mb{T}(K^p)$ acts faithfully on $\widehat{H}^0(K^p)^{N(\mb{Z}_p)}$. Since $\mb{T}(K^p)$ is reduced and $\mc{O}$-torsion free, it suffices to prove that $\widehat{H} ^0(K^p)^{N(\mb{Z}_p)}_E[\mf{p}]$ is non-zero for a dense set of primes $\mf{p}$ in $\Spec(\mb{T}(K^p)[1/p])$. The set $P_{alg}$ of prime ideals $\mf{p}$ in $\mb{T}(K^p)_E$ such that $\widehat{H}^0(K^p)_E[\mf{p}]_{l.alg}$ is non-zero is known to be dense (Corollary 4 in \cite{so}). Now, for any irreducible locally algebraic representation $\pi \otimes W$ of $G(\mb{Q}_p)$, we have that $(\pi \otimes W)^{N(\mb{Z}_p)}$ is non-zero. Therefore, for all $\mf{p} \in P_{alg}$ we have $\widehat{H}^0(K^p)^{N(\mb{Z}_p)}_E[\mf{p}] \not = 0$ and we conclude.
\end{proof}

Let now $\mf{m}$ be a maximal ideal of $\mb{T}(K^p)$ as in Section 2. We say that $\mf{m}$ is \textbf{ordinary} if it comes from a maximal ideal of $\mb{T}(K^p)^{\ord}$. By the lemma above, the quotient map $\mb{T}(K^p)_{\mf{m}} \ra \mb{T}(K^p)^{\ord} _{\mf{m}}$ is an isomorphism.

\medskip

Let us fix an ordinary non-Eisenstein maximal ideal $\mf{m}$ of $\mb{T}(K^p)$. Recall that we have defined $\bar\rho_{\mf{m}}$ and $\rho_{\mf{m}}$ in Section 2. For any prime ideal $\mf{p}$ in $\mb{T}(K^p)_{\mf{m}}$ coming from the maximal spectrum $\Spm (\mb{T}(K^p)_{\mf{m}}[1/p])$, we will write 
$$\rho_{\mf{p}} := \rho_{\mf{m}} \otimes_{\mb{T}(K^p)_{\mf{m}}} \mb{T}(K^p)_{\mf{m}}/\mf{p} [1/p]$$
which is a continuous Galois representation over a finite extension of $\mb{Q}_p$. We will need the following result of Geraghty:

\begin{prop}\label{prop:dense}
Consider the set $P_{autom} ^{cris}$ of maximal ideals in $\mb{T}(K^p)_{\mf{m}}[1/p]$ such that $H^0(K^p K_p(0),\mc{V}_W)[\mf{p}]$ is non-zero for some irreducible algebraic representation $W$ of $G$. Then:

\begin{itemize}

\item This set is Zariski dense in $\Spec(\mb{T}(K^p)_{\mf{m}}[1/p])$.
 
\item For any $\mf{p}$ in $P_{autom} ^{cris}$, the representation $\rho_{\mf{p}}$ is triangularisable (and crystalline) at each place dividing $p$.
\end{itemize}
\end{prop}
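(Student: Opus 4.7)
The plan is to treat the two assertions separately, leveraging at the outset the identification $\mb{T}(K^p)_{\mf{m}} \simeq \mb{T}(K^p)^{\ord}_{\mf{m}}$ provided by the preceding lemma, so that all arguments may be carried out inside the ordinary Hecke algebra. Both claims are in fact proved in \cite{ge}; the goal here is to lay out the logical skeleton.

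For the density statement, I would reinterpret $\mb{T}(K^p)^{\ord}_{\mf{m}}$ through its action on $\Ord_B(\widehat{H}^0(K^p))_{\mf{m}}$ and bring in the Iwasawa algebra $\mc{O}[[T(\mb{Z}_p)]]$ of weight characters, which acts on this ordinary part and commutes with the Hecke action. The key input from \cite{ge} is a Hida-style control theorem asserting that this makes $\Ord_B(\widehat{H}^0(K^p))_{\mf{m}}$ a finitely generated module over $\mc{O}[[T(\mb{Z}_p)]]$, and that specialisation at an arithmetic dominant weight $\lambda$ recovers the $\mf{m}$-part of the ordinary classical cohomology $H^0(K^pK_p(0), \mc{V}_{W_\lambda})_{\mf{m}}$. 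Because the locus of dominant algebraic characters is Zariski dense in $\Spec(\mc{O}[[T(\mb{Z}_p)]][1/p])$ and each such $\lambda$ contributes maximal ideals in $P^{cris}_{autom}$, one deduces by a standard going-up argument that $P^{cris}_{autom}$ is dense in $\Spec(\mb{T}(K^p)_{\mf{m}}[1/p])$.

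For the triangularisability statement, every $\mf{p} \in P^{cris}_{autom}$ comes from a classical automorphic representation $\pi$ of $G$ whose local component $\pi_v$ at each $v|p$ admits a nonzero $\GL_n(\mc{O}_{F_{\tilde w}})$-fixed vector; by the Satake isomorphism $\pi_v$ is an unramified principal series, so $\rho_{\mf{p},v}$ is crystalline with Hodge–Tate weights prescribed by $W$. That $\mf{p}$ factors through $\mb{T}(K^p)^{\ord}_{\mf{m}}$ forces $\pi$ to be $p$-ordinary in Hida's sense, which by standard local Galois theory translates into the existence of a complete $G_{F_{\tilde w}}$-stable flag in $\rho_{\mf{p},v}$ whose graded pieces are unramified characters twisted by integer powers of the cyclotomic character. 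This is exactly the assertion that $\rho_{\mf{p},v}$ is triangular (and patently crystalline).

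The main obstacle is the control theorem invoked in the density step: one must verify that, after localisation at the non-Eisenstein ordinary ideal $\mf{m}$, the ordinary completed cohomology is controlled by the Iwasawa algebra of weights, so that arithmetic specialisations form a dense family. This is exactly the content of \cite{ge} \S 2.4 for the definite unitary groups considered here, and we plan to cite it rather than reprove it.
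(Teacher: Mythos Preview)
Your proposal is correct and matches the paper's own proof, which is extremely terse: the paper simply cites Corollary 2.7.8 of \cite{ge} for the triangularisability/crystallinity assertion and, for density, cites Corollary 4 of \cite{so} with the Hida--Geraghty route (via Corollary 3.1.4 of \cite{ge}) offered as an alternative. Your sketch fleshes out precisely this second route; the only difference worth noting is that the paper's primary reference for density is Sorensen's general result on density of crystalline points in completed cohomology, which does not use the ordinary structure at all and so avoids the control theorem you identify as the main obstacle.
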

\begin{proof}
The point 2. follows from Corollary 2.7.8 of \cite{ge}. The point 1. is the density of cristalline points which is proved in Corollary 4 of \cite{so}, or can be deduced from the density result of Hida used by Geraghty in the proof of Corollary 3.1.4 in \cite{ge}.
\end{proof}

As a consequence of this proposition, the residual representation $\bar\rho_{\mf{m},w}$ is triangularisable for each $w|p$.

\medskip

We now assume further that $\bar\rho_{\mf{m},w}$ is totally indecomposable and generic for each $w|p$. Note that generic was only defined for triangular representations.
However the definition extends unambiguously to triangularisable representations, provided they are totally indecomposable, because such representations factor through a unique Borel subgroup.

\medskip

Our goal is to define $\Pi(\rho _{\mf{m}, w}) ^{ord}$ where $\rho _{\mf{m}, w}$ is the restriction of $\rho _{\mf{m}}$ to the decomposition group $G_{F_w} = \Gal (\bar{F} _w / F_w)$ for any place $w|p$ of $F$. In order to do so, we need to prove that $\rho _{\mf{m}, w}$ is triangularisable. We basically do so, but not over $\mb{T}_{\mf{m}}$ but rather over a bigger $\mc{O}$-algebra $\mb{T} _{\mf{m}} '$. This is sufficient for our applications. 

\medskip



Following Geraghty (Section 3.1 of \cite{ge}) we introduce a subfunctor $G$ of $\Spec \mb{T} _{\mf{m}} \times \mc{F}$ defined on $A$-points as the set of $\mc{O}$-homomorphisms $\mb{T}_m \ra A$ and filtrations $\Fil \in \mc{F} (A)$ ($\mc{F}$ is the flag variety) preserved by the induced representation $\rho _{A,w}$. In fact, Geraghty defined this functor over a universal ring $R$, but we shall need it only over the Hecke algebra.


\medskip

This functor is representable by a closed subscheme $\mc{G}$ of $\Spec \mb{T}_{\mf{m}} \times \mc{F}$ (Lemma 3.1.2 in \cite{ge}). We consider the resulting morphism $f: \mc{G} \ra \Spec \mb{T} _{\mf{m}}$. 

\begin{prop}
The morphism $f: \mc{G} \ra \Spec \mb{T} _{\mf{m}}$ is proper with geometric fibres of cardinal one.
\end{prop}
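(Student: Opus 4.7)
The plan is to establish properness of $f$ and the claim about geometric fibres separately.

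For properness, I would observe that by construction $\mc{G}$ is a closed subscheme of $\Spec \mb{T}_{\mf{m}} \times \mc{F}$, and that the flag variety $\mc{F}$ is projective over $\mc{O}$. Hence the projection $\Spec \mb{T}_{\mf{m}} \times \mc{F} \to \Spec \mb{T}_{\mf{m}}$ is proper, and $f$ inherits properness as its restriction to a closed subscheme.

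For the geometric fibres, let $x : \Spec \kappa \to \Spec \mb{T}_{\mf{m}}$ be a geometric point with $\kappa$ algebraically closed, and write $\rho_x := \rho_{\mf{m},w} \otimes_{\mb{T}_{\mf{m}}} \kappa$ for the specialisation. The fibre $f^{-1}(x)$ identifies with the closed subscheme of $\mc{F}_\kappa$ parametrising Borels of $\GL_{n,\kappa}$ containing $\rho_x(G_{F_w})$, so the goal reduces to showing this is a single reduced $\kappa$-point, i.e.\ non-empty and unique.

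For non-emptiness, I would invoke Proposition \ref{prop:dense}: the set $P^{cris}_{autom}$ of triangularisable (crystalline) points is Zariski dense in $\Spec(\mb{T}_{\mf{m}}[1/p])$, so $f(\mc{G})$ contains a Zariski dense subset of $\Spec \mb{T}_{\mf{m}}$. Since $f$ is proper, $f(\mc{G})$ is closed and therefore equals the whole of $\Spec \mb{T}_{\mf{m}}$; each scheme-theoretic fibre is then non-empty, and being of finite type over its residue field has geometric points. For uniqueness, I would take any preserved flag in $\kappa^n$ and spread it out to a flag over a local $\mc{O}$-subring $R \subset \kappa$ through which $\mb{T}_{\mf{m}} \to \kappa$ factors, using the valuative criterion of properness for $\mc{F}$. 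Its reduction modulo the maximal ideal $\mf{m}_R$ is a flag preserved by a base change of $\bar{\rho}_{\mf{m},w}$, which by total indecomposability and Lemma \ref{lem:tot-indec} is forced to be the standard flag. A direct adaptation of the argument of Lemma \ref{lem:tot-reduction}, replacing the pair $(k, E)$ by $(R/\mf{m}_R, \kappa)$ and using genericity to uniquely lift the pairwise distinct diagonal characters, then forces the original flag over $\kappa$ to be uniquely determined.

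The main obstacle is the uniqueness step: one must generalise Lemmas \ref{lem:tot-indec} and \ref{lem:tot-reduction} from the specific pair $(k, E)$ to $(R/\mf{m}_R, \kappa)$ for an arbitrary geometric point, especially for characteristic zero points with large residue field, and verify that total indecomposability and genericity of $\bar{\rho}_{\mf{m},w}$ persist under the relevant reductions. Once this is set up, the content of the proposition is precisely that the hypotheses on $\bar{\rho}_{\mf{m},w}$ propagate to force a unique Borel at every geometric point.
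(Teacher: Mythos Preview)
Your proposal is correct and follows essentially the same line as the paper's proof: properness via the flag variety, non-emptiness of fibres via density of $P^{cris}_{autom}$ plus properness giving surjectivity, and uniqueness via reduction to $\bar{\rho}_{\mf{m},w}$ where total indecomposability and genericity pin down the flag. The paper compresses your spreading-out step into the single sentence ``each $j$-th graded piece $\gr_j$ has to be a lifting of $\bar{\chi}_{j,w}$ (see proofs of Lemma \ref{lem:tot-indec} and Lemma \ref{lem:tot-reduction})''; your explicit use of the valuative criterion over a local (valuation) ring $R$ dominating the image of $\mb{T}_{\mf{m}}$ is precisely the mechanism those lemma-proofs encode, and the obstacle you flag (extending Lemmas \ref{lem:tot-indec} and \ref{lem:tot-reduction} to the pair $(R/\mf{m}_R,\kappa)$) is the content the paper leaves to the reader.
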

\begin{proof}
The properness of $f$ follows from that of the flag variety (cf. the proof of Lemma 3.1.3 in \cite{ge}). Let us now prove that each geometric fibre is of cardinal one. Let us denote by $\bar{\chi}_{1,w},\bar{\chi}_{2,w},..., \bar{\chi}_{n,w}$ characters of $G_{F_w}$ appearing on the diagonal of $\bar{\rho}_{\mf{m},w}$. Firstly, we remark that geometric fibres are non-empty. Indeed, $f$ is dominant by Proposition \ref{prop:dense}, hence surjective since it is proper. On the other hand, there is at most one filtration $\Fil$ over each geometric point, because $\bar{\rho} _{\mf{m},w}$ is generic and totally indecomposable hence each $j$-th graded piece $gr _j = \Fil _j / \Fil _{j-1}$ has to be a lifting of $\bar{\chi} _{j,w}$ (see proofs of Lemma \ref{lem:tot-indec} and Lemma \ref{lem:tot-reduction}). This allows us to conclude.
\end{proof}

By Proposition above and Zariski Main Theorem we conclude that $f$ is finite and hence $\mc{G} = \Spec \mb{T}_{\mf{m},w} '$ for some $\mc{O}$-algebra $\mb{T}_{\mf{m},w}'$ finite over $\mb{T}_{\mf{m}}$.

\begin{coro}\label{coro:perfect}
The morphism $f: \Spec \mb{T}_{\mf{m},w} ' \ra \Spec \mb{T} _{\mf{m}}$ is a homeomorphism which induces an isomorphism of residual fields at each prime $\mf{p}' \in \Spec \mb{T}_{\mf{m},w} '$ with perfect residual field.
\end{coro}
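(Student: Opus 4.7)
The plan is to exploit two facts already established: the morphism $f$ is finite (by the preceding proposition combined with Zariski's Main Theorem), and its geometric fibres have exactly one point. Together these are essentially enough; the corollary is a standard unpacking.

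First I would verify the homeomorphism statement. Finite morphisms are closed and, by the argument in the proof of the previous proposition, $f$ is surjective (it is proper and dominant). So it remains to show that $f$ is injective on underlying points. Given $\mf{p} \in \Spec \mb{T}_{\mf{m}}$, the set of primes of $\mb{T}_{\mf{m},w}'$ lying over $\mf{p}$ is in bijection with $\Spec(\mb{T}_{\mf{m},w}' \otimes_{\mb{T}_{\mf{m}}} \kappa(\mf{p}))$, and the geometric fibre at $\mf{p}$ is $\Spec(\mb{T}_{\mf{m},w}' \otimes_{\mb{T}_{\mf{m}}} \overline{\kappa(\mf{p})})$. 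Since base change along $\kappa(\mf{p}) \hookrightarrow \overline{\kappa(\mf{p})}$ is surjective on $\Spec$, a single geometric point forces a single prime in the fibre over $\mf{p}$. Hence $f$ is a continuous closed bijection, i.e.\ a homeomorphism.

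Next I would analyse the residue field at $\mf{p}'$. Let $\mf{p} = f(\mf{p}')$ and set $A = \mb{T}_{\mf{m},w}' \otimes_{\mb{T}_{\mf{m}}} \kappa(\mf{p})$, a finite $\kappa(\mf{p})$-algebra. By the previous paragraph $A$ has a unique prime (its nilradical), so $A$ is local Artinian with residue field $\kappa(\mf{p}')$, and the geometric fibre $A \otimes_{\kappa(\mf{p})} \overline{\kappa(\mf{p})}$ again has a unique prime. Since nilpotents do not affect the cardinality of the spectrum, this forces $\kappa(\mf{p}') \otimes_{\kappa(\mf{p})} \overline{\kappa(\mf{p})}$ to have a unique prime as well; equivalently, the separable closure of $\kappa(\mf{p})$ inside $\kappa(\mf{p}')$ must equal $\kappa(\mf{p})$. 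In other words, the finite extension $\kappa(\mf{p}')/\kappa(\mf{p})$ is purely inseparable.

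Finally, when $\kappa(\mf{p}')$ is perfect a purely inseparable extension $\kappa(\mf{p}')/\kappa(\mf{p})$ is automatically trivial (any purely inseparable element of a perfect field lies in the base), so $\kappa(\mf{p}') = \kappa(\mf{p})$, yielding the desired isomorphism of residue fields. There is no real obstacle here; the only mild subtlety is keeping track of the distinction between the scheme-theoretic fibre $A$ (which may carry nilpotents) and its underlying reduced point, so as to correctly deduce pure inseparability of $\kappa(\mf{p}')/\kappa(\mf{p})$ from the single-geometric-point hypothesis.
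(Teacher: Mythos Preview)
Your proof is correct and follows exactly the approach indicated in the paper, which simply says ``It follows from the fact that geometric fibres of $f$ are of cardinal one.'' You have carefully unpacked this one-line justification: the single-geometric-point condition gives both injectivity on points (hence the homeomorphism, once combined with finiteness and surjectivity) and pure inseparability of each residue field extension, which collapses to an isomorphism when the top field is perfect.
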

\begin{proof}
It follows from the fact that geometric fibres of $f$ are of cardinal one.
\end{proof}
We define $\mb{T}_{\mf{m}}'$ to be the tensor product over $\mb{T}_{\mf{m}}$ of $\mb{T}_{\mf{m},w}'$ for all $w|p$. This is still an $\mc{O}$-algebra finite over $\mb{T}_{\mf{m}}$ with $\Spec \mb{T}_{\mf{m}}'$ homeomorphic to $\Spec \mb{T}_{\mf{m}}$.

\medskip

Consider the base-change of $\rho_\mf{m}$ to $\mb{T}_{\mf{m}}'$, that is $\rho_{\mf{m}} ' : G_F \ra \GL_n(\mb{T}_{\mf{m}}')$. By what we have said above, $\rho_{\mf{m},w}'$ can be conjugated to a triangular representation $\rho_{\mf{m},w}''$ for each $w|p$, which is also generic and totally indecomposable at each $w|p$, because $\bar{\rho} _{\mf{m},w}$ is by our assumption. By Corollary above, for each prime ideal $\mf{p}$ associated to an automorphic representation $\pi$ on $G(\mb{A})$, there exists a unique $\mf{p}'$ in $\mb{T}_{\mf{m}}'$ such that $\rho_{\mf{m}} ' / \mf{p}' \rho_{\mf{m}} ' [1/p]  \simeq \rho_{\mf{m}} / \mf{p} \rho_{\mf{m}}[1/p]$.

\medskip 

The above discussion leads us to the following definition
$$\Pi(\rho _{\mf{m},w})^{ord} := \Pi (\rho _{\mf{m},w} '')^{ord}$$
and similarly
$$\Pi(\rho _{\mf{m},w})_{I} := \Pi (\rho _{\mf{m},w} '')_I$$
for any $I$, in particular for $I= \emptyset$ which we shall use below. These are representations over $\mb{T}_{\mf{m}} '$. To conclude using our precedent results that the reduction modulo prime ideals of $\Pi(\rho _{\mf{m},w})^{ord}$ is well-behaved and compatible with the construction of Breuil-Herzig we need the following fact.

\begin{lemm}
For each prime ideal $\mf{p}$ of $\mb{T}_{\mf{m}}'$ which comes from a maximal ideal of $\mb{T}_{\mf{m}}'[1/p]$, the representation $\Pi (\rho _{\mf{m},w} '')^{ord} / \mf{p}[1/p]$ does not depend on the chosen triangulation $\rho _{\mf{m},w} ''$ of $\rho _{\mf{m},w} '$ (where by triangulation of $\rho _{\mf{m},w} '$ we mean a triangular representation which can be conjugated to $\rho _{\mf{m},w} '$).
\end{lemm}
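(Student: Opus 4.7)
The plan is to reduce the statement to one over the residue field $E' := \mb{T}_{\mf{m}}'/\mf{p}[1/p]$ (a finite extension of $E$ by hypothesis on $\mf{p}$) and then invoke the compatibility with the Breuil--Herzig construction. Let $\rho_1''$ and $\rho_2''$ be two triangulations of $\rho_{\mf{m},w}'$, i.e.\ two triangular lifts valued in $B(\mb{T}_{\mf{m}}')$ that are $\GL_n(\mb{T}_{\mf{m}}')$-conjugate to $\rho_{\mf{m},w}'$. Applying Lemma~\ref{lem:reduction} to the quotient map $\mb{T}_{\mf{m}}'\twoheadrightarrow E'$ gives, for $i=1,2$,
$$\Pi(\rho_i'')^{ord}/\mf{p}[1/p]\;\simeq\;\Pi\bigl(\rho_i''/\mf{p}[1/p]\bigr)^{ord},$$
so it is enough to show that the right-hand sides are isomorphic as $E'[G(\mb{Q}_p)]$-modules.

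Next, since $\bar\rho_{\mf{m},w}$ is generic and totally indecomposable by assumption, Lemma~\ref{lem:tot-reduction} (applied after conjugating the image into $B(\mc{O}_{E'})$, which is legitimate by compactness of $G_{F_w}$) guarantees that each $\rho_i''/\mf{p}[1/p]$ is again generic and totally indecomposable. Proposition~\ref{prop:comp} then identifies $\Pi(\rho_i''/\mf{p}[1/p])^{ord}$ with the ordinary Banach representation attached by Breuil--Herzig in \cite{bh} to the Galois representation $\rho_i''/\mf{p}[1/p]$.

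Finally, both $\rho_1''/\mf{p}[1/p]$ and $\rho_2''/\mf{p}[1/p]$ are $\GL_n(E')$-conjugate to $\rho_{\mf{m},w}'/\mf{p}[1/p]$ and hence to one another. Moreover, Lemma~\ref{lem:tot-indec} forces the conjugating element to lie in $B(E')$: a triangular totally indecomposable representation factors through a unique Borel, and $B$ is its own normaliser in $\GL_n$. Since the Breuil--Herzig construction depends only on the $\GL_n$-isomorphism class of the Galois representation (this being the very purpose of the ``good conjugate'' formalism in Lemma~3.3.5 of \cite{bh}), the two representations agree, which finishes the proof.

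The delicate point is the last one: one must verify that the Breuil--Herzig recipe is insensitive to $B$-conjugation of the input. Concretely, conjugating $\rho$ by $b\in B$ does not change the associated character $\chi_\rho$ (because the induced automorphism of $T$ via the quotient $B\twoheadrightarrow T$ is trivial), and it replaces each $\GL_2$-ingredient $\mc{E}_\beta$ by an isomorphic one (by functoriality of the fixed quasi-inverse $\MF^{-1}$). After parabolic induction these isomorphisms assemble into an isomorphism $\Pi(\rho_1'')^{ord}\simeq \Pi(\rho_2'')^{ord}$; this verification, implicit in \cite{bh}, is the only genuine work beyond formal application of the lemmas invoked above.
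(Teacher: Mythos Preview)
Your proposal is correct and follows essentially the same route as the paper: reduce to the residue field via Lemma~\ref{lem:reduction}, invoke Proposition~\ref{prop:comp} to land in the Breuil--Herzig setting, use Lemmas~\ref{lem:tot-reduction} and~\ref{lem:tot-indec} to obtain total indecomposability and hence uniqueness of the Borel, and then appeal to Lemma~3.3.5 of \cite{bh}. The only cosmetic difference is that the paper phrases the last step as ``each triangulation mod $\mf{p}$ is a good conjugate of $\rho_{\mf{m},w}'/\mf{p}$'' and cites Lemma~3.3.5 of \cite{bh} directly, whereas you unpack this into the statement that the conjugating element lies in $B(E')$ and then verify $B$-invariance by hand; these are two descriptions of the same mechanism.
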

\begin{proof}
By Proposition \ref{prop:comp} we deal with the construction of Breuil-Herzig and hence we can use facts from \cite{bh}. We have to prove that for any triangulation $\rho _{\mf{m},w} ''$ the reduction $\rho _{\mf{m},w} '' / \mf{p}$ is a good conjugate of $\rho _{\mf{m},w} / \mf{p}$ (Definition 3.2.4 in \cite{bh}). This would give our claim by Lemma 3.3.5 of \cite{bh}. By our assumption that $\bar{\rho} _{\mf{m},w}$ is generic triangular and totally indecomposable, any triangular lift $\rho$ of $\bar{\rho} _{\mf{m},w}$ is totally indecomposable and generic by Lemma \ref{lem:tot-reduction}. Then we conclude by (3) of Lemma \ref{lem:tot-indec} that each triangulation of $\rho _{\mf{m},w} '/\mf{p}$ (in particular $\rho _{\mf{m},w} '' / \mf{p}$) is a good conjugate of $\rho _{\mf{m},w} ' /\mf{p}$.

\end{proof}

We summarize our efforts so far in the following theorem.

\begin{theo}
Let $\mf{m}$ be an ordinary non-Eisenstein ideal of $\mb{T}$ such that $\bar{\rho} _{\mf{m}, w}$ is totally indecomposable and generic for each $w|p$ in $F$. Then we have for any prime ideal $\mf{p'}$ of $\mb{T}_{\mf{m}}'$ (with the inverse image $\mf{p}$ in $\mb{T}_{\mf{m}}$) which comes from a maximal ideal of $\mb{T}_{\mf{m}}'[1/p]$:
$$\Pi (\rho' _{\mf{m},w}) ^{ord} / \mf{p}' \Pi (\rho' _{\mf{m},w}) ^{ord} [1/p] \simeq \Pi (\rho _{\mf{m},w} / \mf{p} \rho _{\mf{m},w}[1/p]) ^{ord}$$
\end{theo}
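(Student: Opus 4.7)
The plan is to combine the base-change compatibility provided by Proposition \ref{prop:comp} with the triangulation-independence recorded in the lemma immediately preceding the theorem. By construction, $\Pi(\rho'_{\mf{m},w})^{ord} := \Pi(\rho''_{\mf{m},w})^{ord}$, where $\rho''_{\mf{m},w}$ is an arbitrarily chosen triangulation of $\rho'_{\mf{m},w}$ over $\mb{T}_{\mf{m}}'$. Moreover, by Corollary \ref{coro:perfect}, the quotient $E' := \mb{T}_{\mf{m},w}'/\mf{p}'[1/p]$ is canonically identified with the finite extension $\mb{T}_{\mf{m}}/\mf{p}[1/p]$ of $E$, and the reduction $\rho''_{\mf{m},w} \otimes_{\mb{T}_{\mf{m}}'} E'$ is then a triangular representation which conjugates to $\rho_{\mf{m},w}/\mf{p}\rho_{\mf{m},w}[1/p]$.

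First, I would apply Proposition \ref{prop:comp} to the morphism $\mb{T}_{\mf{m}}' \to E'$. The hypotheses are verified: $\bar\rho''_{\mf{m},w} = \bar\rho_{\mf{m},w}$ is generic, triangular and totally indecomposable by the assumption of the theorem, and $\rho''_{\mf{m},w}$ is triangular by definition. One then obtains
$$\Pi(\rho''_{\mf{m},w})^{ord} \otimes_{\mb{T}_{\mf{m}}'} E' \simeq \Pi(\rho''_{\mf{m},w} \otimes_{\mb{T}_{\mf{m}}'} E')^{ord},$$
where the right-hand side denotes the original Breuil-Herzig construction over $E'$. The left-hand side coincides tautologically with $\Pi(\rho'_{\mf{m},w})^{ord}/\mf{p}'\Pi(\rho'_{\mf{m},w})^{ord}[1/p]$.

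Second, I would invoke the preceding lemma: by Lemma \ref{lem:tot-reduction} every triangular lift of a generic and totally indecomposable $\bar\rho_{\mf{m},w}$ is itself generic and totally indecomposable, hence by (3) of Lemma \ref{lem:tot-indec} and Lemma 3.3.5 of \cite{bh}, every triangulation of $\rho_{\mf{m},w}/\mf{p}[1/p]$ is a good conjugate of itself and yields the same representation $\Pi(\rho_{\mf{m},w}/\mf{p}\rho_{\mf{m},w}[1/p])^{ord}$. Composing the two isomorphisms gives the desired identification.

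The proof is mostly a packaging argument once Proposition \ref{prop:comp} and the triangulation-independence lemma are in hand; the main obstacle is therefore bookkeeping: one must check carefully that the identification of residue fields furnished by Corollary \ref{coro:perfect} identifies the reduction $\rho''_{\mf{m},w} \otimes E'$ with a triangulation of $\rho_{\mf{m},w}/\mf{p}[1/p]$ in a manner compatible with the choice of Borel used throughout. This compatibility is automatic because all constructions are applied through the intrinsic flag preserved by the totally indecomposable $\bar\rho_{\mf{m},w}$, and the triangulation-independence of $\Pi(\cdot)^{ord}$ on the field level absorbs any remaining choice.
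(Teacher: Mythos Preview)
Your proposal is correct and follows precisely the approach the paper intends: the theorem is stated as a summary of the preceding results (Proposition~\ref{prop:comp}, Lemma~\ref{lem:reduction}, Lemma~\ref{lem:tot-reduction}, Corollary~\ref{coro:perfect}, and the triangulation-independence lemma), and you have unpacked the logical dependencies accurately. One minor slip: you write $E' := \mb{T}_{\mf{m},w}'/\mf{p}'[1/p]$, but $\mf{p}'$ lives in the full tensor product $\mb{T}_{\mf{m}}'$, not in the single factor $\mb{T}_{\mf{m},w}'$; this does not affect the argument.
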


Similar compatibilities with reduction modulo prime ideals hold for $\Pi(\rho _{\mf{m},w})_{I}$.

\subsection{On the pro-modular Fontaine-Mazur conjecture}

We come back to our general formalism which we will apply to $\Pi(\rho _{\mf{m},w})_{\emptyset}$. We assume that $\mf{m}$ is a non-Eisenstein ordinary ideal of $\mb{T}$ such that $\bar{\rho}_{\mf{m},w}$ is triangular, generic and totally indecomposable for each $w|p$. We take $\mb{T}_{\mf{m}}'$ to be $\mb{T}_{\mf{m}}'$ from preceding sections. We start with two lemmas:

\begin{lemm}
Let $\psi _1,\psi _2 : \Gal(\overline{\mb{Q}}_p / \mb{Q}_p) \ra E$ be two de Rham characters such that $\psi _1 \psi _2 ^{-1} \notin \{1,\varepsilon, \varepsilon ^{-1}\}$ and let $0 \ra \psi _1 \ra V \ra \psi _2 \ra 0$ be the non-split extension (there is a unique one; see below). Suppose that $V$ is de Rham. Then $\HT(\psi _1) < \HT(\psi _2)$ (normalizing $\HT(\varepsilon) = -1$).
\end{lemm}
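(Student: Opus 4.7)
The plan is to twist so that $\psi_2$ is trivial and then compute the Bloch--Kato Selmer subspace $H^1_g$ for a single character.

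Setting $\chi := \psi_1 \psi_2^{-1}$ and tensoring the given exact sequence with $\psi_2^{-1}$ reduces the problem, by additivity of $\HT$, to showing the following: if $\chi$ is de Rham, $\chi \notin \{\mathbf{1},\varepsilon,\varepsilon^{-1}\}$, and the non-split extension $0 \to \chi \to V' \to \mathbf{1} \to 0$ is de Rham, then $\HT(\chi) < 0$. Tate's Euler--characteristic formula together with local duality gives
\[ \dim_E H^1(G_{\mb{Q}_p}, \chi) = 1 + \dim_E H^0(\chi) + \dim_E H^0(\chi^{-1}\varepsilon) = 1, \]
using $\chi \neq \mathbf{1},\varepsilon$; this also accounts for the parenthetical uniqueness of the non-split extension.

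The de Rham condition on $V'$ is by definition the condition $[V'] \in H^1_g(G_{\mb{Q}_p}, \chi)$. The standard Bloch--Kato dimension formulas give
\[ \dim_E H^1_f(\chi) = \dim_E H^0(\chi) + \dim_E(D_{dR}(\chi)/\Fil^0 D_{dR}(\chi)), \qquad \dim_E H^1_g(\chi)/H^1_f(\chi) = \dim_E H^0(\chi^{-1}\varepsilon), \]
the second formula coming from the orthogonality relations between $H^1_e$, $H^1_f$, $H^1_g$ under local Tate duality. Under our genericity hypothesis both $H^0$ terms on the right vanish, so $\dim_E H^1_g(\chi) = \dim_E (D_{dR}(\chi)/\Fil^0 D_{dR}(\chi))$.

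To finish, a one-dimensional de Rham representation $\chi$ of Hodge--Tate weight $h$ has $D_{dR}(\chi)$ one-dimensional with its unique Hodge jump at $h$ (sanity check: $D_{dR}(\varepsilon) = E \cdot t^{-1} \subset \Fil^{-1}$, consistent with $\HT(\varepsilon) = -1$), so $D_{dR}(\chi)/\Fil^0 D_{dR}(\chi) = 0$ precisely when $h \geq 0$. Since $V'$ is a non-split de Rham extension it yields a non-zero class in $H^1_g(\chi)$, forcing $H^1_g(\chi) \neq 0$, hence $h < 0$, i.e.\ $\HT(\psi_1) < \HT(\psi_2)$. The main obstacle I anticipate is merely in quoting the Bloch--Kato formulas with the correct normalisations; everything else is a direct unpacking of definitions and of the Hodge filtration on a character. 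An alternative route would go through the theory of triangular $(\varphi,\Gamma)$-modules over the Robba ring, but the Bloch--Kato approach is more economical here.
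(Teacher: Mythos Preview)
Your argument follows the same route as the paper's: both identify the extension class in $H^1_g(G_{\mb{Q}_p},\chi)$ with $\chi=\psi_1\psi_2^{-1}$, compute $\dim H^1_g$ via Bloch--Kato, and read off the sign of the Hodge--Tate weight. The paper simply cites Bergdall's thesis for the dimension formula, whereas you spell it out.

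One correction is needed. The orthogonality under local Tate duality gives
\[
\dim_E H^1_g(\chi)/H^1_f(\chi)\;=\;\dim_E D_{\cris}(\chi^{-1}\varepsilon)^{\varphi=1},
\]
not $\dim_E H^0(\chi^{-1}\varepsilon)$; the latter vanishes as soon as $\chi\neq\varepsilon$, but the former need not. This does not break your argument, only the bookkeeping: if $h:=\HT(\chi)\geq 0$ then $\chi^{-1}\varepsilon$ has Hodge--Tate weight $-h-1\leq -1$, and for a one-dimensional weakly admissible filtered $\varphi$-module the Newton and Hodge slopes must coincide, so $D_{\cris}(\chi^{-1}\varepsilon)^{\varphi=1}\neq 0$ would force $0=-h-1$, contradicting $h\geq 0$. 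Hence the extra term still vanishes in the contrapositive, and your conclusion $\dim H^1_g(\chi)=\dim D_{dR}(\chi)/\Fil^0$ (under $h\geq 0$) stands. With this patch the proof is complete.
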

\begin{proof}
The fact that there is a unique extension of $\psi _1$ by $\psi _2$ follows from the fact that $H^1 = H^1(\Gal(\overline{\mb{Q}}_p / \mb{Q}_p), \psi _1 \psi _2 ^{-1})$ is of dimension 1 because $\psi _1 \psi _2 ^{-1}$ is generic. Observe that $V \in H^1$. One can define the Selmer group $H^1 _g = H^1 _g(\Gal(\overline{\mb{Q}}_p / \mb{Q}_p), \psi _1 \psi _2 ^{-1})$ which measures whether $V$ is de Rham (we refer the reader to Chapter II of \cite{ber-t}; Definition is given before Proposition 2.17). By Corollary 2.18 of \cite{ber-t} we see that $V \in H^1 _g$. Hence $H^1 _g$ is of dimension one. But Proposition 2.19 of \cite{ber-t} gives us a formula for the dimension of $H^1 _g$, by which we infer in our case that $dim H^1 _g = 1$ is equal to the number of negative Hodge-Tate numbers (compare with the discussion after Proposition 2.19 in \cite{ber-t}). Hence $\HT(\psi _1) < \HT(\psi _2)$. 
\end{proof}

Recall that we have defined the character $\theta$ in Section 4.1. For $\GL_n$ this character is simply $diag(z_1,...,z_n) \mapsto \prod _i z_i ^{1-i}$.

\begin{lemm}\label{lem:dom}
Let $\rho : \Gal(\overline{\mb{Q}}_p/ \mb{Q}_p) \ra \GL_n(E)$ be a de Rham, triangular, totally indecomposable, generic Galois representation. Then the character $\chi _{\rho} \cdot (\varepsilon ^{-1} \circ \theta)$ is locally algebraic dominant.
\end{lemm}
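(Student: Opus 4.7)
The plan is to read off the diagonal characters $\psi_1,\dots,\psi_n$ of the triangular $\rho$, deduce local algebraicity of the associated character of $T(\mb{Q}_p)$ from the de Rham hypothesis, and extract dominance from the Hodge--Tate weight inequalities supplied by the preceding lemma combined with total indecomposability.

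\medskip

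First I would unpack the character concretely. Since $\rho$ factors through $B(E)\subset\GL_n(E)$, its diagonal entries are characters $\psi_1,\dots,\psi_n$ of $G_{\mb{Q}_p}$, and under the identifications of Section 4.2 we have $\chi_\rho(t_1,\dots,t_n)=\prod_i\psi_i(t_i)$ on $T(\mb{Q}_p)=(\mb{Q}_p^\times)^n$, while $(\varepsilon^{-1}\circ\theta)(t_1,\dots,t_n)=\prod_i\varepsilon^{i-1}(t_i)$ using the explicit formula $\theta(\diag(z_i))=\prod z_i^{1-i}$ for $\GL_n$. Each $\psi_i$ is a subquotient of a de Rham representation hence itself de Rham, and so locally algebraic as a character of $\mb{Q}_p^\times$ by local class field theory; multiplying by the algebraic twist $\varepsilon^{i-1}$ preserves local algebraicity. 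Consequently, $\chi_\rho\cdot(\varepsilon^{-1}\circ\theta)$ is locally algebraic.

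\medskip

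For the dominance assertion I would invoke Lemma \ref{lem:tot-indec}: total indecomposability forces $B$ to be the unique Borel stabilized by the image of $\rho$, which implies that every consecutive two-dimensional subquotient $V_{i+1}/V_{i-1}$ (with sub $\psi_i$ and quotient $\psi_{i+1}$) must be non-split, for otherwise complementing the sub line inside that 2-plane would produce a second stable flag, contradicting uniqueness of $B$. Each such 2-dimensional subquotient is de Rham as a subquotient of $\rho$, and genericity of $\rho$ ensures $\psi_i\psi_{i+1}^{-1}\notin\{1,\varepsilon,\varepsilon^{-1}\}$, so the preceding lemma applies and yields $\HT(\psi_i)<\HT(\psi_{i+1})$ for every $i$.

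\medskip

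Finally, in the paper's convention $\HT(\varepsilon)=-1$ the algebraic weight of a locally algebraic character of $\mb{Q}_p^\times$ is $-\HT$, so the algebraic part of $\psi_i\varepsilon^{i-1}$ has weight $a_i=-\HT(\psi_i)-(i-1)$, giving $a_i-a_{i+1}=\HT(\psi_{i+1})-\HT(\psi_i)+1\geq 2>0$. This is strict dominance of $(a_1,\dots,a_n)$ with respect to the upper-triangular Borel of $\GL_n$, which finishes the proof. The main conceptual step is the application of the preceding lemma (via total indecomposability); the only real delicacy is pinning down the sign conventions relating Hodge--Tate weights to the local class field theory normalization, which I would fix once at the start using $\HT(\varepsilon)=-1$ and the explicit form of $\theta$.
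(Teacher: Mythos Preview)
Your proof is correct and follows the same approach as the paper's: both establish local algebraicity from the de Rham hypothesis and then obtain dominance by applying the preceding lemma to each consecutive pair of diagonal characters, using total indecomposability (via Lemma~\ref{lem:tot-indec}) to guarantee the required non-splitness and genericity to ensure the hypothesis $\psi_i\psi_{i+1}^{-1}\notin\{1,\varepsilon,\varepsilon^{-1}\}$. Your write-up simply makes explicit the weight computation and the flag argument that the paper leaves implicit in the phrase ``which we can do because $\rho$ is totally indecomposable and generic.''
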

\begin{proof}
Triangularity permits us to define $\chi _{\rho}$. It is clear that the character is locally algebraic because $\rho$ is de Rham. We conclude that $\chi _{\rho} \cdot (\varepsilon ^{-1} \circ \theta)$ is dominant by applying the lemma above to each pair of consecutive characters on the diagonal of $\rho$ (which we can do because $\rho$ is totally indecomposable and generic).
\end{proof}

We can now check that for representations $\Pi(\rho _{\mf{m},w})_{\emptyset}$ hypothesis (H1) holds:

\medskip

\noindent (H1): We have to check that if $\mf{p}$ is a prime ideal of $\mb{T}_{\mf{m}}'$ corresponding to the Galois representation $\rho_{\mf{p}}$ which is de Rham and regular at all places $w|p$, then locally algebraic vectors in $\Pi(\rho _{\mf{m},w})_{\emptyset} / \mf{p}[1/p] = \Pi(\rho _{\mf{p},w})_{\emptyset}[1/p]$ are non-zero. Indeed, the locally algebraic vectors in $\Pi(\rho_{\mf{p},w})_{\emptyset}[1/p]$ are non-zero because it is the representation induced from the locally algebraic dominant character $\chi = \chi _{\rho} \otimes (\varepsilon ^{-1} \circ \theta)$ (by Lemma \ref{lem:dom}); to see it we write $\chi = \chi _{sm} \delta _W$ for this character, where $\chi _{sm}$ is smooth and $\delta _W$ is algebraic corresponding to an irreducible algebraic representation $W$ of $G(\mb{Q}_p)$. We have $W = (\Ind _{B^-(\mb{Q}_p)} ^{G(\mb{Q}_p)} \delta _W)^{alg}$. Then the universal completion of the locally algebraic representation $(\Ind _{B^-(\mb{Q}_p)} ^{G(\mb{Q}_p)} \chi _{sm} )^{sm} \otimes W$ is equal to $(\Ind _{B^-(\mb{Q}_p)} ^{G(\mb{Q}_p)} \chi )^{\mc{C}^0} = \Pi(\rho _{\mf{p},w})_{\emptyset}[1/p]$ because $\chi$ is unitary (we inject the locally algebraic induction into the continuous induction by sending $f_ {sm} \otimes f_{alg}$ to $f_{sm}\cdot f_{alg}$, where $f_{sm}$,$f_{alg}$ are functions on smooth, respectively algebraic part). In particular, the set of locally algebraic vectors in $\Pi(\rho_{\mf{p},w})_{\emptyset}[1/p]$ is non-empty. The fact that $\Pi(\rho_{\mf{m},w})_{\emptyset}/\mf{m}'$ is of finite length is clear from the definition.




\medskip

For $\mf{p} \in P_{autom} ^{cris}$ as in Proposition \ref{prop:dense}, we know that each $\rho _{\mf{p},w}$ is crystalline triangularisable. Our hypothesis on $\bar{\rho}_{\mf{m},w}$ implies that $\rho_{\mf{p},w}$ is also totally indecomposable (Lemma \ref{lem:tot-reduction}) and generic. So we may unambigously associate to it a character $\chi_{\rho_{\mf{p},w}}$ of $T_n(\mb{Q}_p)$.

\medskip

Let us recall a classical local-global compatibility result.
\begin{lemm}\label{lem:automorphic}
Fix $\mf{p} \in P_{autom} ^{cris}$. Let $W$ be the irreducible algebraic representation of $G(\mb{Q}_p)$ such that $H^0(K^p, \mc{V}_W)[\mf{p}] \not = 0$. Let $\pi$ be an automorphic representation such that $\pi _f ^{K^p} \subset H^0(K^p, \mc{V}_W)[\mf{p}]$. Then 
  
$$W ^{\vee} = \otimes _{w|p} (\Ind _{B^-(\mb{Q}_p)} ^{\GL_n(\mb{Q}_p))} (\chi _{\rho _{\pi},w} \cdot (\varepsilon ^{-1} \circ \theta))_{alg})^{alg}$$
$$\pi _p = \otimes _{w|p} (\Ind _{B^-(\mb{Q}_p)} ^{\GL_n(\mb{Q}_p)} (\chi _{\rho _{\pi},w} \cdot (\varepsilon ^{-1} \circ \theta))_{sm})^{sm}$$
where we have denoted by $(.)_{sm}$ (respectively, $(.)_{alg}$) the smooth (resp. algebraic) part of the character.
\end{lemm}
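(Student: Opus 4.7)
The plan is to deduce the lemma from classical local-global compatibility for the unitary group $G$ at places above $p$ (as established by Shin, Chenevier-Harris, and Caraiani), combined with the explicit description of crystalline triangular representations in the generic, totally indecomposable setting. The key observation is that both sides of each asserted identity are entirely pinned down by $\rho_{\pi,w}$: the algebraic side via Hodge-Tate data and the smooth side via smooth local Langlands, and the twist by $\varepsilon^{-1}\circ\theta$ is precisely what reconciles the two normalizations.

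First, since $\mf{p}\in P_{autom}^{cris}$, Proposition \ref{prop:dense} together with Lemma \ref{lem:tot-reduction} shows that each $\rho_{\pi,w}$ is crystalline, triangular, generic, and totally indecomposable, so it factors through a unique Borel. Its canonical triangulation yields diagonal characters $\psi_{w,1},\ldots,\psi_{w,n}$ of $G_{F_w}$, which by weak admissibility of the associated filtered $\varphi$-module take the form $\psi_{w,i}=\mathrm{unr}(\alpha_{w,i})\cdot\varepsilon^{-h_{w,i}}$, where the $\alpha_{w,i}$ are the Frobenius eigenvalues on $D_{\cris}(\rho_{\pi,w})$ and the $h_{w,i}$ are the (distinct, by regularity) Hodge-Tate weights, ordered by the triangulation. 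Passing through local class field theory as in Section~4.2, the character $\chi_{\rho_{\pi},w}$ of $T_n(\mb{Q}_p)$ decomposes canonically as the product of an unramified character $\chi_{sm}$ encoding the Satake parameters $\alpha_{w,i}$ and an algebraic character given by the Hodge-Tate weights.

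Second, by classical local-global compatibility at $w\mid p$, the representation $\pi_{p,w}$ corresponds under smooth local Langlands to $\mathrm{WD}(\rho_{\pi,w})^{\mathrm{F\text{-}ss}}$. Triviality of monodromy (from crystallinity) and the pairwise distinctness of the $\alpha_{w,i}$ without multiplicative shifts by $p^{\pm 1}$ (which is genericity) force $\pi_{p,w}$ to be the full irreducible smooth principal series with Satake parameters $\alpha_{w,i}$. A direct unraveling of the half-sum-of-positive-roots shift encoded by $\theta$ and of the $\varepsilon^{-1}$ Tate twist identifies this with $(\Ind_{B^-(\mb{Q}_p)}^{\GL_n(\mb{Q}_p)}(\chi_{\rho_{\pi},w}\cdot(\varepsilon^{-1}\circ\theta))_{sm})^{sm}$. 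For the algebraic factor, the condition $H^0(K^p,\mc{V}_W)[\mf{p}]\neq 0$ forces the Hodge-Tate weights of $\rho_{\pi,w}$ to match (up to the shift by $\theta$) the highest weight of $W_w^\vee$; by Lemma \ref{lem:dom} the character $(\chi_{\rho_{\pi},w}\cdot(\varepsilon^{-1}\circ\theta))_{alg}$ is dominant, and Borel-Weil for $\GL_n$ then identifies its algebraic induction from $B^-$ with $W_w^\vee$.

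The main obstacle is bookkeeping rather than conceptual: one has to carefully align the normalization of local Langlands (unitary vs.\ Tate-normalized), the shift $\theta$ corresponding to the half-sum of positive roots, and the dualizations arising from using $B^-$ and passing to $W^\vee$. Once these conventions are fixed consistently, the lemma follows by tensoring the algebraic and smooth identifications at each $w\mid p$ and recognizing the product as the locally algebraic vectors of the continuous induction of $\chi_{\rho_{\pi},w}\cdot(\varepsilon^{-1}\circ\theta)$, as was used to verify (H1) just above.
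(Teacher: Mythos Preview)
Your proposal is correct and follows essentially the same route as the paper: both arguments reduce to local-global compatibility at $p$ together with genericity (to ensure the smooth principal series is irreducible) and a normalization computation for the $\varepsilon^{-1}\circ\theta$ twist. The only difference is in packaging: the paper cites Geraghty's Corollary 2.7.8 directly (which already records the triangulation of $\rho_{\pi,w}$ and the description of $\pi_w$ in the ordinary setting), whereas you appeal to the underlying compatibility results of Shin, Chenevier--Harris, and Caraiani and reconstruct the algebraic/smooth decomposition by hand via weak admissibility and Borel--Weil.
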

\begin{proof}

The first isomorphism follows from Corollary 2.7.8(i) of \cite{ge} with the following dictionary:
\begin{itemize}
	\item our $W$ is Geraghty's $M_{\lambda}$, therefore $W^{\vee} = \Ind _{B^-} ^{\GL_n} (w_0 \lambda) ^{-1}$
	\item his $\lambda = (\lambda _{\tau})_{\tau : F^+ \hookrightarrow E} = (\lambda _w) _{w|p}$ since $p$ was assumed to be totally split in $F$.
	\item for each $w|p$, loc. cit. tells us that $(\chi _{\rho _{\pi, w}})_{alg} = (w_0 \lambda _w)^{-1} \cdot \theta$
\end{itemize}

The second isomorphism follows from Corollary 2.7.8(ii) of \cite{ge} and the first formula on p. 27 of \cite{ge} (proof of Lemma 2.7.5). Namely, loc. cit. tells us that $\pi _w$ is the unramified subquotient of $(n-\Ind _{B(\mb{Q}_p)} ^{\GL_n(\mb{Q}_p)} (\chi _{\rho _\pi})_{sm}) \otimes |\det| ^{(n-1)/2}$ (normalized induction). But the genericity of $\rho _{\mf{p},w}$ implies that
$$\pi _w = (n-\Ind _{B(\mb{Q}_p)} ^{\GL_n(\mb{Q}_p)} (\chi _{\rho _\pi})_{sm}) \otimes |\det| ^{(n-1)/2}$$
and smooth representation theory tells us that this is also 
$$(n-\Ind _{B^-(\mb{Q}_p)} ^{\GL_n(\mb{Q}_p)} (\chi _{\rho _\pi})_{sm} )\otimes |\det| ^{(n-1)/2} =  (\Ind _{B^-(\mb{Q}_p)} ^{\GL_n(\mb{Q}_p)} (\chi _{\rho _\pi})_{sm} \delta _B ^{-1/2}) \otimes |\det| ^{(n-1)/2} $$
where $\delta _B$ is the modulus character. We conclude by observing that 
$$ \delta _B ^{-1/2} \cdot |\det| ^{(n-1)/2} = (\varepsilon ^{-1} \circ \theta) _{sm} : (z_1,...,z_n) \mapsto \prod |z_i| ^{i-1}$$

\end{proof}

Using $\Pi(\rho _{\mf{m},w})_{\emptyset}$ we can make use of our formalism (Theorem \ref{thm:FM}) to get the pro-modular Fontaine-Mazur conjecture in the following form.

\begin{theo}
Let $\mf{m}$ be an ordinary non-Eisentein ideal of $\mb{T}$ such that $\bar{\rho} _{\mf{m},w}$ is totally indecomposable and generic for all $w|p$. Let $\rho$ be pro-modular with respect to $\mb{T}_{\mf{m}}$ and de Rham regular. Then $\rho$ is modular.
\end{theo}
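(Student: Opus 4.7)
My strategy is to apply Theorem~\ref{thm:FM} to $\mb{T}_{\mf{m}}'$ equipped with the family $\Pi(\rho'_{\mf{m},w})_{\emptyset}$ for $w\mid p$. The first step is to transfer pro-modularity from $\mb{T}_{\mf{m}}$ to $\mb{T}_{\mf{m}}'$: by Corollary~\ref{coro:perfect} the finite morphism $\Spec\mb{T}_{\mf{m}}'\to\Spec\mb{T}_{\mf{m}}$ is a homeomorphism preserving perfect residue fields, so the prime $\mf{p}\subset\mb{T}_{\mf{m}}$ underlying the pro-modularity of $\rho$ lifts uniquely to a prime $\mf{p}'\subset\mb{T}_{\mf{m}}'$ with $\rho\simeq\rho'_{\mf{m}}/\mf{p}'[1/p]$ and $\widehat{H}^0(K^p)[\mf{p}]\neq 0$. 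It then suffices to verify (H1), (H2) and (H3)[$\mf{p}'$], after which $\rho$ is modular by Theorem~\ref{thm:FM}.

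Hypothesis (H1) has essentially been checked in the discussion preceding the statement. The key point is that, by Lemma~\ref{lem:dom}, for any $\mf{q}'\in\Spm(\mb{T}_{\mf{m}}'[1/p])$ with $\rho_{\mf{q}',w}$ de Rham and regular, the inducing character $\chi_{\rho_{\mf{q}',w}}\cdot(\varepsilon^{-1}\circ\theta)$ is locally algebraic dominant; writing it as $\chi_{sm}\delta_W$ identifies $\Pi(\rho_{\mf{q}',w})_{\emptyset}[1/p]$ with the universal unitary completion of the locally algebraic principal series $(\Ind_{B^-(\mb{Q}_p)}^{\GL_n(\mb{Q}_p)}\chi_{sm})^{sm}\otimes W$, so its locally algebraic vectors are non-zero. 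Finite length of $\Pi(\rho_{\mf{m},w})_{\emptyset}/\mf{m}'$ is immediate from the continuous principal series definition over $k$.

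For (H2), I would take as allowable set the preimage in $\Spec\mb{T}_{\mf{m}}'$ of the crystalline automorphic locus $P_{autom}^{cris}$ of Proposition~\ref{prop:dense}, which is dense. For each $\mf{p}\in P_{autom}^{cris}$ choose an automorphic $\pi$ with $\pi_f^{K^p}\subset H^0(K^p,\mc{V}_W)[\mf{p}]$; Proposition~\ref{prop:alg-vectors} furnishes a $G(\mb{Q}_p)$-equivariant embedding $\pi_p\otimes W^\vee\hookrightarrow\widehat{H}^0(K^p)[\mf{p}]$, and by Lemma~\ref{lem:automorphic} this locally algebraic representation coincides with the locally algebraic part of $\otimes_{w\mid p}(\Ind_{B^-(\mb{Q}_p)}^{\GL_n(\mb{Q}_p)}\chi_{\rho_{\pi,w}}\cdot(\varepsilon^{-1}\circ\theta))^{\mc{C}^0}$, whose universal unitary completion is precisely $\widehat{\otimes}_{w\mid p}\Pi(\rho_{\mf{p}',w})_{\emptyset}[1/p]$. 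Extending the embedding by continuity and rescaling to land in the integral lattice produces a non-zero element of $X[\mf{p}']$ for $\mf{p}'$ above $\mf{p}$.

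The main obstacle is (H3)[$\mf{p}'$]. My plan is to reduce it to topological irreducibility of each factor $\Pi(\rho_{\mf{p}',w})_{\emptyset}[1/p]=(\Ind_{B^-(\mb{Q}_p)}^{\GL_n(\mb{Q}_p)}\chi_{\rho_{\mf{p}',w}}\cdot(\varepsilon^{-1}\circ\theta))^{\mc{C}^0}$ as admissible Banach $\GL_n(\mb{Q}_p)$-representation. The genericity and total indecomposability of $\bar{\rho}_{\mf{m},w}$ are inherited by $\rho_{\mf{p}',w}$ via Lemma~\ref{lem:tot-reduction}, and for such generic unitary triangular parameters the continuous principal series should be topologically irreducible; granted this, the completed tensor product $\widehat{\otimes}_{w\mid p}\Pi(\rho'_{\mf{m},w})_{\emptyset}/\mf{p}'$ has no proper non-trivial closed $G(\mb{Q}_p)$-invariant subspace, so every non-zero continuous $G(\mb{Q}_p)$-equivariant map into $\widehat{H}^0(K^p)$ is automatically an embedding. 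This is the most delicate step, as topological irreducibility of continuous principal series for $\GL_n(\mb{Q}_p)$ with $n>2$ is only known under suitable genericity; the totally indecomposable ordinary hypothesis is precisely what is expected to make this workable.
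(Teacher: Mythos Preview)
Your proposal is correct and follows essentially the same route as the paper: verify (H1) via Lemma~\ref{lem:dom}, (H2) via the dense set $P_{autom}^{cris}$ together with Lemma~\ref{lem:automorphic} and Proposition~\ref{prop:alg-vectors}, and (H3)[$\mf{p}'$] via topological irreducibility of the continuous principal series. The one point you leave tentative---irreducibility of $(\Ind_{B^-(\mb{Q}_p)}^{\GL_n(\mb{Q}_p)}\chi_{\rho_{\mf{p}',w}}\cdot(\varepsilon^{-1}\circ\theta))^{\mc{C}^0}$ under genericity---is exactly what the paper invokes, citing Theorem~3.1.1(ii) of \cite{bh}, so your worry about (H3) is already settled in the literature and no further argument is needed.
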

\begin{proof}
To conclude by Theorem \ref{thm:FM} we have to check that hypotheses (H2) and (H3) hold for $\Pi(\rho _{\mf{m},w})_{\emptyset}$. The hypothesis (H2) says that there exists an allowable set for $\Pi(\rho _{\mf{m},w})_{\emptyset}$, which means that there exists a dense set of prime ideals $\mf{p}$ in the Hecke algebra $\mb{T}_{\mf{m}}'$ for which the associated Galois representation $\rho _{\mf{p},w}$ ($w|p$ is a split place) gives a Banach representation $\Pi(\rho _{\mf{p},w})_{\emptyset}$ with an injection
$$\otimes _{w|p} \Pi(\rho _{\mf{p},w})_{\emptyset} \hookrightarrow \widehat{H}^0(K^p)_E$$
We can prove it for the set $P_{autom} ^{cris}$ which is dense by Proposition \ref{prop:dense}. Indeed, if $\mf{p}$ corresponds to a classical automorphic representation $\pi$ with the Galois representation $\rho _{\pi}$. Following Lemma \ref{lem:automorphic} we put $\chi _w = \chi _{\rho _{\pi},w} \cdot (\varepsilon ^{-1} \circ \theta)$. Take $\chi = \otimes _{w|p} \chi _w$ and write $\chi = \chi _{sm} \delta _W$ as above in the verification of (H1). Then by the description of locally algebraic vectors of $\widehat{H}^0(K^p)_{E, l.alg}$ from Proposition \ref{prop:alg-vectors} we see that $W^{\vee} \otimes (\Ind _{B^-(\mb{Q}_p)} ^{G(\mb{Q}_p)} \chi _{sm}) ^{sm}$ injects into $\widehat{H}^0(K^p)_{E, l.alg}$, (we use here Lemma \ref{lem:automorphic}). Hence taking the completion we see that the universal completion $\widehat{W^{\vee} \otimes (\Ind _{B^-(\mb{Q}_p)} ^{G(\mb{Q}_p)} \chi _{sm}) ^{sm}}$ of $W^{\vee} \otimes (\Ind _{B^-(\mb{Q}_p)} ^{G(\mb{Q}_p)} \chi _{sm}) ^{sm}$ sits in $\widehat{H}^0(K^p)_{E}$. But because $\chi$ is unitary, we have
$$\widehat{W^{\vee} \otimes (\Ind _{B^-(\mb{Q}_p)} ^{G(\mb{Q}_p)} \chi _{sm}) ^{sm}} = (\Ind _{B^-(\mb{Q}_p)} ^{G(\mb{Q}_p)} \chi) ^{\mc{C}^0} = \otimes _{w|p} \Pi(\rho _{\mf{p},w})_{\emptyset}$$
by which we conclude.

\medskip

The hypothesis (H3)[$\mf{p}'$] ($\mf{p}'$ is the prime ideal of $\mb{T}_{\mf{m}}'$ corresponding to $\rho$ by our pro-modularity assumption and Corollary \ref{coro:perfect}) says that we have a closed injection
$$\Pi(\rho _{\mf{p}',w})_{\emptyset} \hookrightarrow \widehat{H}^{0}(K^p)_E [\mf{p}']$$
This is clear in our context because $\rho _{\mf{p}',w}$ is generic and hence $\Pi(\rho _{\mf{p}',w})_{\emptyset}$ is irreducible (see Theorem 3.1.1(ii) in \cite{bh}).

\medskip

This allows us to conclude.

\end{proof}

We can get a more explicit result by using eigenvarieties. In \cite{em1} Emerton has constructed the eigenvariety $\mc{X}$ associated to the group $G$ using completed cohomology. We do not recall here this construction explicitely, but let us mention that $\mc{X}$ parametrises (certain) pro-modular representations. Let $\mc{X}[\bar{\rho}_{\mf{m}}]$ be the $\bar{\rho}_{\mf{m}}$-part of the eigenvariety associated to $U(n)$. In particular every point $x \in \mc{X}[\bar{\rho}_{\mf{m}}]$ is pro-modular with respect to $\mb{T}_{\mf{m}}$. We denote by $\lambda _x$ its corresponding Hecke character and by $\rho_x$ its associated Galois representation.

The above theorem implies the following result

\begin{coro}
Let $\mf{m}$ be an ordinary non-Eisenstein ideal of $\mb{T}_{\mf{m}}$ such that $\bar{\rho} _{\mf{m},w}$ is totally indecomposable and generic for all $w|p$. Let $x$ be an $E$-point on the eigenvariety $\mc{X}[\bar{\rho}_{\mf{m}}](E)$ such that for each place $w|p$ the representation $\rho _{x,w}$ is regular and de Rham. Then $x$ is classical. 
\end{coro}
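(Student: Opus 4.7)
The plan is to deduce the corollary directly from the preceding theorem, so the main task is to check that a point $x \in \mc{X}[\bar{\rho}_{\mf{m}}](E)$ satisfying the Hodge-theoretic hypotheses falls into its scope, and then to upgrade ``modular'' for the Galois representation $\rho_x$ to ``classical'' for the eigenvariety point $x$ itself.

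First I would unwind what it means for $x$ to lie on $\mc{X}[\bar{\rho}_{\mf{m}}]$: by the construction of Emerton in \cite{em1}, each such $x$ is pro-modular with respect to $\mb{T}_{\mf{m}}$, i.e.\ there is a prime ideal $\mf{p} \subset \mb{T}_{\mf{m}}$ with $\rho_x \simeq \rho_{\mf{m}}/\mf{p}[1/p]$ and $\widehat{H}^0(K^p)[\mf{p}] \neq 0$ (this is explicit in the paragraph preceding the corollary). By our hypothesis that each $\rho_{x,w}$ is de Rham and regular at all $w \mid p$, the previous theorem applies and yields that $\rho_x$ is modular, equivalently that
\[
\widehat{H}^0(K^p)_{l.alg}[\mf{p}] \neq 0.
\]

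Next I would invoke Proposition \ref{prop:alg-vectors} to translate this non-vanishing into the existence of an irreducible algebraic representation $W$ of $G$ and a non-zero class in $H^0(K^p, \mc{V}_W)[\mf{p}]$; that class comes from an automorphic representation $\pi$ of $G$ of tame level $K^p$ whose Galois representation is $\rho_x$ (up to twist), and at places $w \mid p$ its local components are determined by $\rho_{x,w}$ via Lemma \ref{lem:automorphic}. The Hecke eigensystem attached to $\pi$ on the unramified-outside-$\Sigma$ Hecke algebra is precisely the system $\lambda_x$ cut out by $x$, because $\rho_x$ and $\rho_\pi$ coincide and the Hecke eigenvalues are read off from characteristic polynomials of Frobenii through the standard formula recalled in Section 2.

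I expect the main obstacle to be the last matching step: one must confirm that the classical automorphic point produced by modularity of $\rho_x$ is genuinely the same point $x$ on the eigenvariety, and not just one sharing the Galois representation. For this I would use the definition of $\mc{X}[\bar{\rho}_{\mf{m}}]$, which parameterises pairs (Hecke character, refinement/triangulation) rather than Galois representations alone, together with the fact that in the ordinary totally indecomposable generic setting the triangulation of $\rho_{x,w}$ is unique (Corollary \ref{coro:perfect} and Lemma \ref{lem:tot-indec}(3)). This rigidity forces $\lambda_x$ to coincide with the eigensystem of $\pi$, so $x$ is the classical point attached to $\pi$, as required.
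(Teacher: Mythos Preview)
Your overall strategy matches the paper's: apply the preceding theorem to deduce that $\rho_x$ is modular, then upgrade this to classicality of the eigenvariety point $x$. The first part (pro-modularity of points on $\mc{X}[\bar{\rho}_{\mf{m}}]$, then modularity of $\rho_x$ via the theorem) is exactly as the paper does it.

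The divergence is in the final upgrade step. The paper's argument is that \emph{genericity} of $\rho_{x,w}$ at each $w\mid p$ makes every refinement accessible: once $\rho_x$ comes from a classical $\pi$, the local component $\pi_p$ is an irreducible principal series, its Jacquet module contains the full Weyl orbit of characters, and therefore whichever refinement the point $x$ happens to carry is realised by $\pi$. (The paper is admittedly terse here and explicitly declines to unpack the notions.)

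You instead try to pin down the refinement via uniqueness of the Galois-side triangulation, citing Lemma~\ref{lem:tot-indec}(3) and Corollary~\ref{coro:perfect}. This conflates two different objects. The refinement datum $\delta_x$ at an eigenvariety point (in Emerton's construction via $J_B$) corresponds, for a crystalline $\rho_{x,w}$, to an ordering of crystalline Frobenius eigenvalues, equivalently a triangulation of the $(\varphi,\Gamma)$-module --- not a $G_{F_w}$-stable flag in $\rho_{x,w}$. A totally indecomposable crystalline $\rho_{x,w}$ has a \emph{unique} Galois filtration but still admits $n!$ refinements in the eigenvariety sense, and $x$ could carry any one of them. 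So the rigidity you invoke does not force $\delta_x$ to agree with the refinement coming from $\pi$ via the ordinary filtration, and the argument as written does not close the gap. The paper's route through genericity is what makes the step work, precisely because it shows \emph{all} $n!$ refinements are classical without needing to identify which one $x$ carries.
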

\begin{proof}
The conclusion that $x$ is modular follows from the theorem above. To see that it is classical, it is enough to observe that because our Galois represetation is generic at places dividing $p$, every refinement is accessible. We do not explain these notions here. 
\end{proof}

We obtain a similar result (i.e. pro-modularity with additional assumptions implies modularity) for $U(2)$ in \cite{cs} in the setting of irreducible Galois representations (rather than triangular).


\end{document}